\newtheorem{lemma}{Lemma}[section]
\newtheorem{theorem}[lemma]{Theorem}
\newtheorem{proposition}[lemma]{Proposition}
\theoremstyle{definition}
\newtheorem{example}[lemma]{Example}
\newtheorem{remark}[lemma]{Remark}
\theoremstyle{remark}
\newenvironment{smallarray}[1]
 {\null\,\vcenter\bgroup\scriptsize
  \renewcommand{\arraystretch}{0.7}%
  \arraycolsep=.13885em
  \hbox\bgroup$\array{@{}#1@{}}}{\endarray$
  \egroup\egroup\,\null}
\newcommand{\M}{\overline{M}_{0,6}}
\newcommand{\git}{\ensuremath{\operatorname{/\!\!/}}}
\newcommand{\MOn}{\overline{M}_{0,n}}
\newcommand{\C}{\mathbb C}
\newcommand{\Q}{\mathbb Q}
\DeclareMathOperator{\PGL}{PGL}
\DeclareMathOperator{\Hom}{Hom}
\DeclareMathOperator{\Spec}{Spec}
\DeclareMathOperator{\trop}{trop}
\DeclareMathOperator{\Eff}{Eff}
\DeclareMathOperator{\Cox}{Cox}
\DeclareMathOperator{\Pic}{Pic}
\DeclareMathOperator{\pos}{pos}
\DeclareMathOperator{\spann}{span}
\DeclareMathOperator{\ord}{ord}
\DeclareMathOperator{\divv}{div}
\DeclareMathOperator{\Sym}{Sym}
\DeclareMathOperator{\Mov}{Mov}
\begin{document}

\title{A presentation for the Cox ring of $\overline{M}_{0,6}$}

\author{Martha Bernal Guill\'en}

\address{Unidad Acad\'emica de Matem\'aticas\\ Universidad Aut\'onoma de Zacatecas\\
C.P. 98000  \\ Zacatecas, Zacatecas, M\'exico}
\email{m.m.bernal.guillen@gmail.com}

\author{Diane Maclagan}
\address{Mathematics Institute\\ University of Warwick\\
Coventry, CV4 7AL\\ United Kingdom}
\email{D.Maclagan@warwick.ac.uk}

\begin{abstract}
We compute the relations in the Cox ring of the moduli space $\M$.
This gives a presentation of the Cox ring as a
quotient of a polynomial ring with 40 generators by an ideal with 225
generators that come in 5 symmetry classes.
\end{abstract}

\maketitle 

\section{Introduction}

Mori dream spaces, introduced by Hu and Keel in \cite{HuKeelMDS}, are
varieties for which the minimal model program works particularly well.
In particular, the nef cone is polyhedral and generated by finitely
many semi-ample divisors, and the movable cone is the union of finitely
many polyhedral cones which are the nef cones of small $\mathbb Q$-factorial
modifications of the original variety.  While this is a strong
condition, one consequence of \cite{BCHM} is that log Fano varieties
are Mori dream spaces.

In the aftermath of \cite{BCHM} understanding the birational geometry
of the moduli space of curves, following the Hassett-Keel program, has
become a major industry; see, for example, \cite{Hassett,
  HassettHyeon, Alperetal}.  A natural question is whether
$\overline{M}_{g,n}$ is a Mori dream space; the case $g=0$ was raised
in the original paper~\cite{HuKeelMDS}.  The answer seems to be
increasingly no; $\overline{M}_{g,n}$ is not a Mori dream space for
large $g$ and $n$ \cite{ChenCoskun,Mullane}, and $\overline{M}_{0,n}$
is not a Mori dream space for $n \geq 10$
\cite{CastravetTevelevNotMDS, GonzalezKaru, HausenKeicherLaface}.

For smaller $n$, however, the situation is more tractable.
In \cite{Castravet} Castravet showed that the moduli space $\M$ is a
Mori dream space; since $\M$ is log Fano, this now also follows from \cite{BCHM}.
Castravet's work builds on earlier work of Keel, Vermeire~\cite{Vermeire}, and
  Hassett and Tschinkel~\cite{HassettTschinkel} who showed that the
  effective cone of $\M$ is generated by the $25$ boundary divisors,
  and by $15$ extra divisors.  She showed that the Cox ring of
  $\M$ is generated by sections of these $40$ divisors.  In this paper
  we compute the relations between these generators to give a
  presentation for the Cox ring.

The $25$ boundary divisors are indexed by subsets $I \subset
\{1,2,3,4,5,6\}$ with $2 \leq |I| \leq 3$ and $1 \in I$ if $|I|=3$.
We denote this set by $\mathcal I$, and a section of the divisor
$\delta_I$ indexed by $I$ by $x_I$.  The $15$ extra Keel-Vermeire
divisors are indexed by permutations in the symmetric group $S_6$ of
the form $(ij)(kl)(mn)$.  We denote the set of these permutations
by $\Pi$, and for $\pi \in \Pi$ write $y_{\pi}$ for a section of the
corresponding divisor.
The main result of this paper is:

\begin{theorem} \label{t:maintheorem}
Let $S = \mathbb C[x_I, y_{\pi} : I \in \mathcal I, \pi \in \Pi]$.  The Cox ring
of $\M$ is $S/I_{\M}$, where $I_{\M}$ is an ideal with 225 generators,
which come in five symmetry classes:

\begin{enumerate}
\item $15$ of the form:
\begin{equation*}
x_{ij}x_{kl}x_{ijn}x_{kln}-x_{ik}x_{jl}x_{ikn}x_{jln}+x_{il}x_{jk}x_{iln}x_{jkn}
\end{equation*}
for $1\leq i<j<k<l\leq 6$, $m<n$ and $\{i,j,k,l,m,n\}=\{1,2,3,4,5,6\}$;

\item  $60$ of the form:
\begin{equation*}
x_{1ik}y_{(1m)(ij)(kl)}+x_{1j}x_{mk}x_{il}x_{1mj}x_{1mk}x_{1ij}+ x_{1l}x_{mi}x_{jk}x_{1mi}x_{1ml}x_{1kl},
\end{equation*}
for $\{i,j,k,l,m\}=\{2,3,4,5,6\}$;

\item  $45$ of the form:
\begin{equation*}
x_{ij}y_{(ij)(kl)(mn)} +  x_{ik}x_{il}x_{jm}x_{jn}x_{ikl}^2 -  x_{im}x_{in}x_{jl}x_{jk}x_{imn}^2,
\end{equation*}
for $\{i,j,k,l,m,n\}=\{1,2,3,4,5,6\}$ and $i<j$;

\item  $45$ of the form:
\begin{equation*}
  y_{(ij)(kl)(mn)}y_{(ij)(km)(ln)}-x_{il}x_{lj}x_{jm}x_{mi}x_{nk}^{2}x_{ijm}^{2}x_{ijl}^{2} +x_{in}x_{nj}x_{jk}x_{ki}x_{ml}^{2}x_{ijk}^{2}x_{ijn}^{2},
\end{equation*}
for $\{i,j,k,l,m,n\} = \{1,2,3,4,5,6\}$;

\item  $60$ of the form:
\begin{multline*}
\qquad \quad y_{(1i)(jk)(lm)}y_{(1j)(il)(km)} - x_{1k}x_{1l}x_{ik}x_{im}x_{jl}x_{jm}x_{1ik}x_{1il}x_{1jk}x_{1jl}\\
+x_{1k}x_{1l}x_{ij}x_{im}x_{jm}x_{kl}x_{1ij}x_{1il}x_{1jk}x_{1kl} -x_{1k}x_{1m}x_{ij}x_{im}x_{jl}x_{kl}x_{1ij}x_{1im}x_{1jk}x_{1km}\\
-x_{1l}x_{1m}x_{ij}x_{ik} x_{jm} x_{kl} x_{1ij} x_{1il} x_{1jm} x_{1lm},
\end{multline*}
for $\{i,j,k,l,m\} = \{2,3,4,5,6\}$.
\end{enumerate}
Here we follow the convention that $x_{ij}\!=\!-x_{ji}$,
$x_{ijk}\!=\!-x_{jik}=\!-x_{ikj}$, $x_{ijk}\!=\!x_{lmn}$ for
\small$\{i,j,k,l,m,n \}= \{1,2,3,4,5,6\}$\normalsize, with $i<j<k$ and
$l<m<n$, and $y_{(ij)(kl)(mn)}=-y_{(kl)(ij)(mn)}=-y_{(ij)(mn)(kl)}$.
\end{theorem}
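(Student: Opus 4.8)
The plan is to reduce Theorem~\ref{t:maintheorem} to two statements --- that the $225$ listed polynomials lie in the defining ideal, and that they already generate it --- and to prove the second by passing to a localization in which the Cox ring becomes transparent. By Castravet's theorem there is a surjective $\Pic(\M)$-graded ring homomorphism $\phi\colon S\to\Cox(\M)$ carrying each $x_I$ to a chosen section of $\mathcal{O}_{\M}(\delta_I)$ and each $y_\pi$ to a chosen section of the Keel--Vermeire divisor indexed by $\pi$; a preliminary step is to record the class in $\Pic(\M)\cong\Z^{16}$ of each of these $40$ divisors, making the grading on $S$ explicit. Set $I_{\M}=\ker\phi$. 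Since $\M$ is a smooth projective rational variety with $\Pic(\M)$ free of rank $16$ and only constant invertible global functions, $\Cox(\M)$ is a normal domain (in fact a UFD) of Krull dimension $\dim\M+\rk\Pic(\M)=3+16=19$, so $I_{\M}$ is a prime ideal of $S$ of height $40-19=21$. It therefore suffices to prove: (i) each of the $225$ polynomials lies in $I_{\M}$; and (ii) the ideal $J\subseteq S$ they generate has $S/J$ an integral domain of dimension $19$. Indeed (i) gives $J\subseteq I_{\M}$, while (ii) makes $J$ a prime of height $40-19=21$, equal to the height of $I_{\M}$, and a prime contained in another of the same height equals it.

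For (i): the $225$ polynomials fall into five $S_6$-orbits of sizes $15,60,45,45,60$, and for a compatible normalization of the sections $\phi(x_I)$ and $\phi(y_\pi)$ the homomorphism $\phi$ is $S_6$-equivariant, so it is enough to verify one representative from each orbit. I would do this on an explicit rational chart of $\M$: parametrize six points on $\mathbb{P}^1$ by a $2\times 6$ matrix with Pl\"ucker coordinates $p_{ij}=\det(\text{columns }i,j)$, express $\phi(x_I)$ as the evident monomial in the $p_{ij}$ and $\phi(y_\pi)$ as the polynomial in the $p_{ij}$ representing the Keel--Vermeire class (a three-term $\SL_2$-covariant), and check that each of the five representatives vanishes identically after substitution --- each identity being a consequence of the three-term Pl\"ucker relations among the $p_{ij}$. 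Since $\Cox(\M)$ is a domain and the chart is dense in $\M$, this proves membership in $I_{\M}$.

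Part (ii) is the main obstacle, and its content is the \emph{lower} bound: that $J$ is large enough, with no component of $V(J)$ concealed over the boundary. The plan is to localize at $f:=\prod_{I\in\mathcal{I}}x_I$. After inverting the $x_I$, the relations of class (3) are linear in the $y_\pi$ and solve for each of the $15$ variables $y_\pi$ as a rational function of the $x_I$; substituting, the remaining relations --- the other relations of class (3) together with all of classes (2), (4), (5) --- become relations purely among the $x_I$, which one verifies (a finite computation) are consequences of the $15$ relations of class (1). The upshot is an isomorphism $(S/J)_f\cong\Cox(\M)_f$; since $\Cox(\M)$ is a domain and each $\phi(x_I)$ is nonzero in it, $\Cox(\M)_f$ is an integral domain of dimension $19$, hence so is $(S/J)_f$. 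It then remains to prove that $f$ is a nonzerodivisor on $S/J$, equivalently that $J$ is saturated with respect to $f$; for then $S/J$ injects into $(S/J)_f$ and is a $19$-dimensional domain, completing the proof. I expect this saturation statement to be where essentially all the work lies: it is exactly the assertion that the $225$ generators --- including the at-first-glance redundant classes (2), (4), (5) --- already kill every associated prime of $J$ supported on the boundary $\bigcup_{I}V(x_I)$. I would attempt it in one of two ways: either by a direct, symmetry-assisted computation that $J:f^{\infty}=J$, exploiting the $S_6$-action and the $\Pic$-grading to keep the Gr\"obner basis feasible; or by covering $\Spec\Cox(\M)$ with the affine charts obtained by inverting individual generators $x_I$ and $y_\pi$, identifying each chart (via the explicit relations) with the spectrum of the corresponding localization of $S/J$ and checking that these charts patch, so that $S/J$ and $\Cox(\M)$ have the same spectrum and $S/J$ is reduced.
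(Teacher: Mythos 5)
Your outline runs parallel to the paper's actual proof: one representative of each $S_6$-orbit is shown to lie in the ideal of relations, the $S_6$-action handles the rest, and the ideal $J$ generated by the $225$ polynomials is compared with $I_{\M}$ after inverting variables, the decisive point being a saturation statement that is checked by computer. Your variations (comparing heights of primes rather than contracting from the localization, and inverting only the $x_I$ while eliminating the $y_\pi$ via the class-(3) relations) are legitimate reorganizations. But as written the proposal leaves gaps at exactly the places where the paper has to do concrete work. First, the membership step is not just "substitute the evident monomial in the $p_{ij}$": the sections $x_{1ij}$ of the triple boundary divisors are not Pl\"ucker monomials, and the restriction of $y_\pi$ to the open torus is the three-term expression only up to a monomial correction in the $x_{1ij}$. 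Pinning down these exponents in a consistent trivialization is the content of the paper's Lemma~2.4 (the equation of $Q_\pi\cap M_{0,6}$) and Lemma~3.4 (the order-of-vanishing matrix $R$), and since the relations in classes (3)--(5) contain squares of the $x_{1ij}$, a wrong normalization destroys the identities; moreover the representatives of classes (4) and (5) are \emph{not} single Pl\"ucker consequences but require specific combinations of translates of the earlier classes, as the paper's proof makes explicit.

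Second, your assertion that $(S/J)_f\cong\Cox(\M)_f$ (with $f=\prod_I x_I$) needs an input you never supply: after eliminating the $y_\pi$ it is not enough that the remaining relations are consequences of class (1); you must also know that the class-(1) relations, with the $x_I$ inverted, generate the \emph{entire} ideal of relations among the boundary sections, i.e.\ a presentation of $\C[M_{0,6}]$ as a quotient of a Laurent polynomial ring. Without this you only get $J_f\subseteq (I_{\M})_f$, and your height argument cannot close because you cannot yet bound $\dim (S/J)_f$ by $19$ or conclude it is a domain. This input is true, but it is a theorem (the paper's Lemma~2.2, quoted from Gibney--Maclagan) and must be invoked or proved. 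Finally, you correctly identify the saturation $J:f^{\infty}=J$ as the crux, but you only propose two ways one might attempt it; the paper resolves precisely this point by an explicit {\tt Macaulay2} computation. Until that computation (or a conceptual replacement) is actually carried out, and the section restrictions and the $M_{0,6}$ presentation are put in place, the proposal is a sound plan rather than a proof.
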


The proof exploits the embedding of $\M$ into a $24$-dimensional toric
variety determined by the generators of the Cox ring.  It can be done
with various degrees of computer assistance.  One contribution of this
paper is that we have taken care when choices were necessary so that
the $S_6$ action is transparent, and the representatives given here
for the generators have a simple form.

One application
of a complete finite presentation of the Cox ring of a variety $X$ is
that it allows the computation of all small birational models of $X$.
Using an earlier circulated version of this paper B\"ohm, Keicher, and
Ren have computed all the chambers of the Mori chamber decomposition
of $\M$ \cite{BKR}, finding $176, 512, 180$ maximal chambers that form $249,
605$ orbits under the $S_6$-action.  While this suggests that the
problem of computing all small birational models is intractable, in
Section~\ref{s:smallbirationalmodels} we consider some simplifications
which imply that the chambers of the movable cone are simpler.
Identifying these models has already been considered in \cite{Moon}
for the two-dimensional slice of the Ner\'on-Severi group containing
$K_{\M}$ and $\sum_{i=1}^6 \psi_i$.  This slice intersects two other
chambers, corresponding to the symmetric GIT quotient of $(\mathbb
P^1)^6$, and the symmetric Veronese quotient.

The structure of the paper is as follows. 
 In Section~\ref{s:background} we develop the necessary preliminaries
 on the generators of $\Cox(\M)$.  In section~\ref{s:coxequations} we
 explain how to compute the relations between these generators.
 Theorem~\ref{t:maintheorem} is proved in Section~\ref{s:maintheorem}.
 Finally, in Section~\ref{s:smallbirationalmodels} we consider the
 problem of using Theorem~\ref{t:maintheorem} to compute the small
 birational models of $\M$, and provide some computational
 simplifications.

\section{The Cox ring of $\M$}
\label{s:background}

In this section we recall some background on the Cox ring of $\M$.  The key
result of this section is Lemma~\ref{l:Qpiequation}, which will be
used to prove Theorem~\ref{t:maintheorem}.

The moduli space $\M$ of stable genus zero curves with $6$ marked
points is a smooth $3$-dimensional variety that compactifies
$M_{0,6} \cong (\mathbb C^* \setminus \{1\})^3 \setminus \bigcup_{1 \leq
  i <j \leq 3}\{x_i =x_j \}$.  The boundary $\M \setminus M_{0,6}$
parameterizes stable genus zero curves with $6$ marked points.  The
boundary is a union of $25$ boundary divisors $\delta_I$, where $I
\subset \{1,\dots, 6\}$ with $|I|=2$ or $|I|=3$, and we identify
$\delta_I$ and $\delta_{\{1,\dots,6\} \setminus I}$ when $|I|=3$.  The
boundary divisor $\delta_I$ is the closure of the locus in $\M$
consisting of those nodal curves with two irreducible components,
where the points with labels in $I$ lie on one component, and the
other points on the other.

The Picard group of $\M$ is isomorphic to $\mathbb Z^{16}$ and is
spanned by the boundary divisors.  
Given a basis $\mathcal B = \{E_1,\dots,E_{16}\}$ for $\Pic(\M)$, the Cox ring, first
introduced for any $\mathbb Q$-factorial variety $X$ with $\Pic(X)
\cong \mathbb Z^m$ in \cite{HuKeelMDS} is
$$\Cox(\M) = \oplus_{a \in \mathbb Z^{16}} H^0(\M, \mathcal 
  O(a_1E_1+\dots+a_{16}E_{16}) ).$$ This is naturally graded by $\Pic(\M)$.

 The pseudo-effective cone
$\Eff(\M)$ is generated by the classes of the $25$ boundary divisors
and the classes of $15$ other effective divisors,
the Keel-Vermeire divisors~\cite{HassettTschinkel}, whose description
we recall below.  In \cite{Castravet} Castravet showed that sections
of these divisors suffice to generate the Cox ring of $\M$.

\begin{theorem}[{\cite[Theorem 1.4]{Castravet}}] \label{t:Castravet}
The Cox ring of $\overline{M}_{0,6}$ is generated by sections of the
boundary divisors $\delta_I$ and the Keel-Vermeire divisors $Q_{\pi}$.
\end{theorem}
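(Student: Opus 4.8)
The plan is to establish the reverse inclusion: the $\Pic(\M)$-graded subalgebra $R\subseteq\Cox(\M)$ generated by the distinguished sections $x_I$ of the boundary divisors $\delta_I$ and $y_\pi$ of the Keel--Vermeire divisors $Q_\pi$ is all of $\Cox(\M)$. The support of the grading on $\Cox(\M)$ is exactly the monoid of effective classes, which, by the results of Vermeire and Hassett--Tschinkel already recalled, is generated by the $25$ boundary classes and the $15$ Keel--Vermeire classes; so the issue is never which divisor classes occur, but whether some effective line bundle $\mathcal O_{\M}(D)$ carries a global section that is not a polynomial in the $x_I$ and $y_\pi$. The whole problem is therefore to rule out such ``unexpected'' sections, degree by degree.

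To make this manageable I would place $\M$ at the end of a chain of elementary birational modifications whose initial term has a transparent Cox ring, and propagate generation along the chain. Two standard choices present themselves: Kapranov's description of $\M$ as the blow-up of $\mathbb P^3$ at five general points followed by the blow-up of the strict transforms of the ten lines joining pairs of them; and the chain of GIT wall-crossings for $(\mathbb P^1)^6\git\SL_2$, whose ring of invariants for the symmetric linearization is the bracket ring $\bigoplus_d H^0((\mathbb P^1)^6,\mathcal O(d,\dots,d))^{\SL_2}$, generated, by the First Fundamental Theorem for $\SL_2$, by the fifteen $2\times 2$ brackets $(ij)$, which cut out the divisors $\delta_{ij}$. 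Either way one transports generation using the usual Cox-ring lemmas: a small $\mathbb Q$-factorial modification leaves the Cox ring unchanged, and for the blow-up $g\colon\widetilde Y\to Y$ of a smooth centre with exceptional divisor $E$, the ring $\Cox(\widetilde Y)$ is generated by a section of $E$, by the pullbacks of generators of $\Cox(Y)$, and by finitely many further classes lifting $H^0(Y,\mathcal O(D))$ for the divisors $D$ with $g^*D-E$ on the new extremal ray. The heart of the matter is to check that those finitely many extra generators are precisely sections of the remaining boundary divisors $\delta_I$ with $|I|=3$ and of the $Q_\pi$.

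Carrying this out requires keeping simultaneous track of the effective, movable, and nef cones of $\M$ (all known) and of the intermediate spaces; identifying each $\delta_I$ and each $Q_\pi$ explicitly as a strict transform at the stage where it first appears --- here the concrete description of $Q_\pi$ as the closure of the locus of six marked points admitting an involution of $\mathbb P^1$ pairing them according to $\pi$ is what forces genuinely new divisors, ones not pulled back from any forgetful image $\overline{M}_{0,5}$; and, at each blow-up, verifying the surjectivity demanded by the lemma above, which comes down to computing $H^0$ of a bounded list of line bundles and exhibiting monomial spanning sets in the sections already in hand. The $S_6$-symmetry is indispensable for this, collapsing the divisors, the modifications, and the line bundles to be inspected into a small number of orbits.

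The step I expect to be the main obstacle is exactly this last one. Whatever chain of modifications one chooses, one must eventually control global sections on a three-fold that is not toric --- the five points and ten lines being in general position --- and prove that the only new sections forced at each stage are the expected boundary and Keel--Vermeire ones, rather than some further exotic effective divisor a priori allowed by the cone combinatorics. This is the real content of \cite{Castravet}, and an honest argument must either invoke sufficiently strong vanishing or global-generation input on the intermediate spaces or else push through a careful, though finite, case analysis.
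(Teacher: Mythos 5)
The first thing to note is that this paper does not prove the statement at all: Theorem~\ref{t:Castravet} is imported verbatim from \cite{Castravet} (Theorem 1.4 there) and is used as an input to Theorem~\ref{t:maintheorem}, so there is no internal argument for you to match; the comparison has to be with Castravet's own proof.

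Judged on its own terms, your proposal is a strategy outline rather than a proof, and the gap is the one you yourself flag at the end: the verification that, at each blow-up or wall-crossing, the only new sections required are those of the remaining $\delta_I$ and of the $Q_\pi$ --- and not further sections of effective bundles permitted a priori by the cone combinatorics --- is exactly the content of the theorem, and it is left uncarried-out. Worse, the ``usual Cox-ring lemma'' you invoke to propagate generation is not a theorem in the generality stated: Cox rings are not functorial under blow-ups, and there is no general statement that $\Cox(\widetilde Y)$ is generated by a section of $E$, pullbacks of generators of $\Cox(Y)$, and finitely many lifts; indeed finite generation itself can be destroyed by precisely such blow-ups, which is how $\overline{M}_{0,n}$ fails to be a Mori dream space for $n\geq 10$ \cite{CastravetTevelevNotMDS, GonzalezKaru}. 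So even the skeleton of your induction needs a substitute, not just the final estimate. Castravet's actual argument does not run along a chain of modifications in this way: after the effective-cone input of \cite{Vermeire, HassettTschinkel} pins down the degrees in which generators can live, she fixes an effective class, restricts sections to boundary divisors (which are products of $\overline{M}_{0,5}$'s and $\overline{M}_{0,4}$'s with known Cox rings), and uses exact sequences, vanishing statements, and the $S_6$-action to show by induction on the class that no new generators beyond the $x_I$ and $y_\pi$ are needed. The parts of your write-up that are correct --- that the effective cone identifies the possible degrees of generators, and that small $\mathbb Q$-factorial modifications leave the Cox ring unchanged --- only reduce the problem to the hard step; they do not carry it out.
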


In this paper we extend Castravet's result to describe in addition the
relations on this ring.  In the rest of this section we explicitly
describe the complement of these divisors in $\M$.

In \cite{KapranovChow} Kapranov shows that $\MOn$ is the Chow quotient of the
Grassmannian $G(2,n)$ by the action of the torus $(\mathbb C^*)^{n-1}
\cong (\mathbb C^*)^n/\mathbb C^*$ induced from the action of
$(\mathbb C^*)^{n-1}$ on $\mathbb P^{n-1}$.  Under this identification
we have $M_{0,n} = G^0(2,n)/(\mathbb C^*)^{n-1}$, where $G^0(2,n)$ is
the locus in the Grassmannian $G(2,n)$ where all Pl\"ucker coordinates
are nonzero.  This is naturally a subvariety of the quotient torus
\small$(\mathbb C^*)^{{n \choose 2}-1}/(\mathbb C^*)^{n-1}$\normalsize, whose
coordinate ring we denote by \small$\mathbb C[z_{24}^{\pm 1},
  \dots,z_{n-1n}^{\pm 1}]$\normalsize.  We denote by $\mathcal E$ the index set
\small$\{ \{2,4\},\dots,\{5,6\}\}$ \normalsize of the variables in this Laurent
polynomial ring. 
The following lemma is the $n=6$ case of \cite[Proposition
  6.4]{GibneyMaclaganEquations}.
\begin{lemma} \label{l:M06equations}
The moduli space $M_{0,6}$ is an affine variety with coordinate ring 
$$\mathbb C[z_{24}^{\pm 1},z_{25}^{\pm 1},z_{26}^{\pm 1},z_{34}^{\pm 1},z_{35}^{\pm 1},z_{36}^{\pm 1}, z_{45}^{\pm 1},
  z_{46}^{\pm 1},z_{56}^{\pm 1}]/  I_6 ,$$
where 
\begin{align*} I_6 = \langle z_{34}-z_{24}+1, z_{35}-z_{25}+1, 
z_{36}-z_{26}+1, z_{45}-z_{25}+z_{24}, z_{46}-z_{26}+z_{24}, z_{56}-z_{26}+z_{25} \rangle.
\end{align*}
\end{lemma}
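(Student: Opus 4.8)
The quickest route is to cite \cite[Proposition~6.4]{GibneyMaclaganEquations} specialised to $n=6$; but the lemma can also be proved directly by making Kapranov's identification $M_{0,6}=G^0(2,6)/(\mathbb{C}^*)^{5}$ explicit in Pl\"ucker coordinates, and I would present that argument. The starting observation is that, since every Pl\"ucker coordinate $p_{ij}$ is invertible on $G^0(2,6)$, the torus acting by $p_{ij}\mapsto t_it_jp_{ij}$ can be used to normalise $p_{1a}=p_{12}$ for $a=3,\dots,6$ and $p_{23}=p_{12}$; the resulting slice of $G^0(2,6)$ meets each torus orbit in exactly one point and so maps isomorphically onto $M_{0,6}$, and on it the nine ratios $z_{ij}:=p_{ij}p_{12}p_{13}/(p_{23}p_{1i}p_{1j})$, for $2\le i<j\le 6$ with $\{i,j\}\ne\{2,3\}$, become the coordinates of the quotient torus. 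Because this slice is a closed subvariety of $(\mathbb{C}^*)^9$, this already exhibits $M_{0,6}$ as affine, with coordinate ring $\mathbb{C}[z_{24}^{\pm1},\dots,z_{56}^{\pm1}]$ modulo the ideal of the slice.

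Next I would compute that ideal. It is classical that the Pl\"ucker ideal of $G(2,6)$ is generated by the fifteen three-term relations $p_{ij}p_{kl}-p_{ik}p_{jl}+p_{il}p_{jk}$, one for each $4$-subset $\{i,j,k,l\}\subseteq\{1,\dots,6\}$ with $i<j<k<l$; hence the ideal of the slice is generated by the images of these fifteen relations after the substitution above and clearing of the (invertible) denominators. A short calculation shows that the ten $4$-subsets containing $1$ yield the \emph{linear} relations $z_{jk}-z_{ik}+z_{ij}=0$ (reading $z_{23}=1$), while the five $4$-subsets contained in $\{2,\dots,6\}$ yield the quadratic relations $z_{ij}z_{kl}-z_{ik}z_{jl}+z_{il}z_{jk}=0$. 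Thus $\mathcal{O}(M_{0,6})$ is $\mathbb{C}[z_{24}^{\pm1},\dots,z_{56}^{\pm1}]$ modulo the ideal generated by these ten linear and five quadratic polynomials.

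It then remains to identify this ideal with $I_6$. The six generators of $I_6$ are exactly the linear relations coming from the quadruples $\{1,2,j,k\}$; solving them expresses $z_{34},z_{35},z_{36},z_{45},z_{46},z_{56}$ as affine-linear combinations of $z_{24},z_{25},z_{26}$ with integer coefficients, and substituting these into the remaining four linear relations (quadruples $\{1,i,j,k\}$ with $i\ge3$) and into the five quadratic relations gives $0$ in each case, so the ideal of the slice equals $I_6$. As a consistency check, this elimination identifies $\mathbb{C}[z_{24}^{\pm1},\dots,z_{56}^{\pm1}]/I_6$ with the coordinate ring of $(\mathbb{C}^*\setminus\{1\})^3\setminus\bigcup_{1\le i<j\le3}\{x_i=x_j\}$, recovering the description of $M_{0,6}$ from the beginning of this section. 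The only genuine work is the bookkeeping: choosing the torus-invariant coordinates, tracking the Pl\"ucker signs through the normalisation, and verifying that the six displayed relations \emph{generate} — not merely belong to — the ideal generated by all fifteen three-term relations; working with the explicit affine slice is what makes it unnecessary to argue separately that the torus quotient introduces no extra saturation.
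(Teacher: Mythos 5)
Your proof is correct, but it is not the route the paper takes: the paper disposes of this lemma by quoting the $n=6$ case of \cite{GibneyMaclaganEquations}*{Proposition 6.4}, recording only the identification \eqref{eqtn:zij} of the $z_{ij}$ with ratios of Pl\"ucker coordinates, whereas you re-prove the statement from scratch by slicing $G^0(2,6)$. The underlying mechanism is the same (Kapranov's torus quotient plus the three-term Pl\"ucker relations), but your version is self-contained and makes explicit why only six linear relations survive: the ten quadruples containing $1$ dehomogenize to $z_{jk}-z_{ik}+z_{ij}$ (with $z_{23}=1$), the six of these coming from quadruples containing both $1$ and $2$ are exactly the listed generators of $I_6$, and the remaining four linear and five quadratic relations vanish identically after substituting $z_{34}=z_{24}-1,\dots,z_{56}=z_{26}-z_{25}$; I checked these dehomogenizations against \eqref{eqtn:zij} and they come out as you state, with no sign trouble, and your normalization $p_{13}=p_{14}=p_{15}=p_{16}=p_{23}=p_{12}$ does cut each torus orbit exactly once since the corresponding characters form a basis of the character lattice of $(\mathbb C^*)^6/\mathbb C^*$. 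What the citation buys is brevity and the general-$n$ statement; what your slice buys is transparency, at the cost of one point you should make explicit rather than wave at: knowing that the slice coincides with $M_{0,6}$ as a set does not by itself identify its ideal with the ideal generated by the substituted Pl\"ucker relations --- you need that substituted ideal to be prime (equivalently, the scheme-theoretic intersection of the localized Pl\"ucker scheme with the normalization locus to be reduced), and your remark that the slice ``makes it unnecessary to argue separately'' about saturation is a bit quick there. The fix is short: the nine $z_{ij}$ together with $p_{12},p_{13},\dots,p_{16},p_{23}$ form a monomial coordinate system on $(\mathbb C^*)^{15}$, each three-term relation times an invertible monomial is a Laurent polynomial in the $z_{ij}$ alone, so the localized Pl\"ucker ideal is extended from $\mathbb C[z_{ij}^{\pm1}]$ and its contraction --- your substituted ideal --- is prime and therefore equals the ideal of the slice. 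With that sentence inserted, your argument is a complete and somewhat more elementary substitute for the paper's citation.
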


This follows from the quotient construction, using the following identification of the variables $z_{ij}$.  We denote by $\{ x_{ij} : 1 \leq i < j \leq n\}$  the
coordinates of $\mathbb P^{{n \choose 2}-1}$.  Then 
\begin{equation} \label{eqtn:zij}
z_{ij} = \begin{cases}
 (x_{ij}x_{12}x_{13})/(x_{1i}x_{1j}x_{23}) & \text{ if } i,j \geq 4, \\
 (x_{2j}x_{13})/(x_{1j}x_{23}) & \text{ if } i =2, \\
(x_{3j}x_{12}) / (x_{1j}x_{23}) & \text{ if } i =3.\\
\end{cases}
\end{equation}
For more details see \cite{GibneyMaclaganEquations}*{Section 6}.

Not all effective divisors on $\M$ lie in the cone spanned by the
boundary divisors, by work of Keel and Vermeire~\cite{Vermeire}.  We
now recall their construction.  The symmetric group $S_6$ 
 acts naturally on $\M$ and $M_{0,6}$ by permuting the points.
Let $\pi=(ij)(kl)(mn) \in S_6$ be the product of three disjoint
transpositions.  The divisor $Q_{\pi}$ is the closure in $\M$ of the
fixed locus of the action of $\pi$ on $M_{0,6}$. 
The divisor $Q_{\pi}$ is effective by construction, but its class in
$\Pic(\M)$ cannot be written as a nonnegative combination
of the classes of the boundary divisors.

The next lemma is the key result of this section, and is needed for the proof of Theorem~\ref{t:maintheorem}.

\begin{lemma} \label{l:Qpiequation}
Let $\pi=(12)(34)(56)$.  The intersection of the divisor $Q_{\pi}$ with $M_{0,6}$ is the subvariety of $(\mathbb C^*)^9$ defined by the ideal
$$I_6 + \langle z_{24}-z_{25}z_{26} \rangle. $$
\end{lemma}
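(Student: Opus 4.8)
The plan is to realize $Q_{\pi}$ as the image of the fixed locus of $\pi$ under the quotient description of $M_{0,6}$, and then translate the fixed-point condition into an equation in the coordinates $z_{ij}$ via the identification \eqref{eqtn:zij}. Since $Q_{\pi}\cap M_{0,6}$ is by definition the fixed locus of $\pi=(12)(34)(56)$ acting on $M_{0,6}$, I would first describe that action on the quotient $G^0(2,6)/(\mathbb C^*)^5$: the permutation $\pi$ acts on the Pl\"ucker coordinates by $x_{ij}\mapsto x_{\pi(i)\pi(j)}$, and this descends to an action on the quotient torus $(\mathbb C^*)^{14}/(\mathbb C^*)^5$ with coordinate ring $\mathbb C[z_{24}^{\pm1},\dots,z_{56}^{\pm1}]$. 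Composing the substitution $\pi$ with the formulas in \eqref{eqtn:zij} expresses each $z_{ij}^{\pi}$ as a Laurent monomial in the $z_{kl}$ (after using $I_6$ to rewrite everything in terms of the chosen generators), so the fixed-locus condition becomes a system of binomial equations $z_{ij}-z_{ij}^{\pi}=0$.

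The second step is to simplify this binomial system modulo $I_6$. Here one should use that a point of $M_{0,6}$ corresponds to six points on $\mathbb P^1$, and $\pi=(12)(34)(56)$ fixes it precisely when there is an automorphism of $\mathbb P^1$ swapping the three pairs; such an involution of $\mathbb P^1$ is determined, and the three pairs must be "harmonic" in a suitable cross-ratio sense. Tracking which cross-ratios this forces, and reading them off through \eqref{eqtn:zij}, should collapse the several binomial relations to the single relation $z_{24}=z_{25}z_{26}$ modulo $I_6$. Concretely I expect that after clearing the substitutions, each equation $z_{ij}-z_{ij}^{\pi}=0$ either holds identically in $\mathbb C[z]/I_6$ or is equivalent to $z_{24}-z_{25}z_{26}=0$, and that at least one of them gives exactly this binomial; the remaining ones are then consequences. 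One must also check the reverse containment — that the binomial system (equivalently $I_6+\langle z_{24}-z_{25}z_{26}\rangle$) really does cut out the fixed locus set-theoretically and scheme-theoretically, i.e. that no spurious components in $(\mathbb C^*)^9$ are introduced — which follows because on the dense torus $M_{0,6}$ the fixed locus is smooth of the expected codimension $1$ (it is a $Q_{\pi}$, known to be a divisor) and the binomial ideal is prime of the right dimension after saturating by the coordinate functions.

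The main obstacle is the bookkeeping in the first two steps: correctly computing the action of $\pi$ on the $z_{ij}$ after passing to the quotient and the chosen affine chart, since \eqref{eqtn:zij} privileges the indices $1,2,3$ and $\pi$ does not preserve this choice, so one must re-express $z_{\pi(i)\pi(j)}$ in terms of the nine generators using the relations $I_6$, and this is where sign and monomial errors are easy to make. A cleaner way to organize this, which I would adopt, is to work directly with cross-ratios: pick an affine coordinate $t$ on $\mathbb P^1$ sending the first marked point to $0$ and the second to $\infty$ (legitimate on $M_{0,6}$), so that the $z_{ij}$ become explicit rational functions of the remaining four coordinates $t_3,t_4,t_5,t_6$; the involution realizing $\pi$ is then $t\mapsto c/t$ for a constant $c$ fixing $\{0,\infty\}=\{t_1,t_2\}$ and swapping $\{t_3,t_4\}$ and $\{t_5,t_6\}$, giving $t_3t_4=t_5t_6=c$; substituting into the formulas for $z_{24},z_{25},z_{26}$ and eliminating $c$ yields $z_{24}=z_{25}z_{26}$ directly, and a dimension count confirms this single equation (with $I_6$) defines all of $Q_{\pi}\cap M_{0,6}$.
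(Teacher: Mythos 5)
Your proposal is correct and, in the cross-ratio form you ultimately adopt, is essentially the paper's own argument: the paper likewise fixes a normalization of the six points (sending $1,2,3$ to $\infty,1,0$), applies the explicit M\"obius transformation realizing $\pi$, and reads off the single fixed-point condition $B+C-A-BC=0$, which via the Pl\"ucker formulas is exactly $z_{24}-z_{25}z_{26}$. Your variant with the involution $t\mapsto c/t$ and the condition $t_3t_4=t_5t_6$ (which indeed translates to $z_{24}=z_{25}z_{26}$) is only a cosmetic difference in normalization, and the binomial-system route of your first paragraph is a detour you rightly abandon.
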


\begin{proof}
A point in $M_{0,6}$ corresponds to a configuration of six distinct
points in $\mathbb P^1$, which we may take to be $\{ \infty, 1, 0,
A,B,C \}$, or more formally $\{ [1:0], [1:1], [0:1], [A:1], [B:1],
[C:1] \}$, where $A,B,C \neq 0,1$.  The transposition $\pi$ acts on
this configuration, taking it to $\{ [1:1], [1:0], [A:1], [0:1],
[C:1], [B:1] \}$.  The automorphism of $\mathbb P^1$ given by the
matrix
$$\left( \begin{array}{rr} 1 & -A \\ 1 & -1 \\ \end{array} \right) \in
\PGL(2)$$ takes this configuration to the configuration $\{[1:0],
    [1:1], [0:1], [A:1], [\frac{C-A}{C-1}:1], [\frac{B-A}{B-1}:1] \}$.
    This point is thus fixed if and only if $\frac{C-A}{C-1} = B$ and
    $\frac{B-A}{B-1}=C$.  This means that $B+C-A-BC =0$.  To translate
    this into Pl\"ucker coordinates, we note that the Pl\"ucker
    coordinates of the matrix
$$\left( \begin{array}{rrrrrr} 1 & 1 & 0 & A& B & C \\ 
0 & 1 & 1 & 1 & 1 & 1\\ \end{array} \right)$$
in the order $z_{24},z_{25},\dots,z_{56}$  are:
$$(- A + 1, - B + 1, - C + 1, -A, -B, -C, A - B, A - C, B - C),
$$
so $B+C-A-BC = z_{24}-z_{25}z_{26}$.  
\end{proof}

Equations for the other Keel-Vermeire divisors can be obtained using
the action of the symmetric group $S_6$ on $\mathbb C[T^9]$, which we
now recall.  The action comes from the description of $T^9$ as
$(\mathbb C^*)^{6 \choose 2}/(\mathbb C^*)^6$, or equivalently from
the formula for $z_{ij}$ given in \eqref{eqtn:zij}.  For example, when
$i,j \geq 4$, we have \small $\textnormal{\small (12)} z_{ij} =
\textnormal{\small (12)} (x_{ij}x_{12}x_{13}/(x_{1i}x_{1j}x_{23})) =
-(x_{ij}x_{12}x_{23})/(x_{2i}x_{2j}x_{13}) =
-z_{ij}/z_{2i}z_{2j}$\normalsize.  This is summarized in Table
\ref{t:S6action1}, where the entry in the row labeled by the adjacent
transposition \small $(i (i+1))$\normalsize \hspace{1pt} and column labeled by
$z_{kl}$ is \small $(i (i+1))z_{kl}$\normalsize .

\hspace{-5mm}
\small
\begin{table}[h] 
{                                  
\renewcommand{\arraystretch}{1.1}
\begin{center}
\begin{tabular}[h]{llllllllll}
 & $z_{24}$ & $z_{25}$ & $z_{26}$ & $z_{34}$ & $z_{35}$ & $z_{36}$ & $z_{45}$ & $z_{46}$ & $z_{56}$ \\

$ (12) $& $ z_{24}^{\textnormal{-}1}$ & $ z_{25}^{\textnormal{-}1}$ & $ z_{26}^{\textnormal{-}1}$ &
  -$  z_{34}/\!z_{24}$ & -$ z_{35}/\!z_{25}$& -$ z_{36}/\!z_{26}$ &
  -$ z_{45}/\!z_{24}z_{25}$ & -$ z_{46}/\!z_{24}z_{26}$ &
  -$ z_{56}/\!z_{25}z_{26}$  \\

$ (23) $& -$z_{34}$ & -$z_{35}$ & -$z_{36}$ & -$z_{24}$ & -$z_{25}$ &
  -$z_{26}$ & -$z_{45}$ & -$z_{46}$ & -$z_{56}$\\

$(34) $& $ z_{24}^{\textnormal{-}1}$ & $ z_{25}/\!z_{24}$ & $ z_{26}/\!z_{24}$ &
    -$ z_{34}/\!z_{24}$ & $ z_{45}/\!z_{24}$ & $ z_{46}/\!z_{24}$ &
    $ z_{35}/\!z_{24}$ & $ z_{36}/\!z_{26}$ & $ z_{56}/\!z_{24}$ \\

$(45) $& $z_{25}$ & $z_{24}$ & $z_{26}$ & $z_{35}$ & $z_{34}$ & $z_{36}$
    & -$z_{45}$ & $z_{56}$ & $z_{46}$ \\

$(56) $& $z_{24}$ & $z_{26}$ & $z_{25}$ & $z_{34}$ & $z_{36}$ & $z_{35}$
                                                                   & $z_{46}$ & $z_{45}$ &-$z_{56}$ \\
 &&&&&&&&&\\
\end{tabular}
\end{center}
\caption{The $S_6$ action on $\C[M_{0,6}]$}
\label{t:S6action1}
}
\end{table}
\normalsize

Applying the action to the equation $z_{24}-z_{25}z_{26}$ for
$Q_{(12)(34)(56)}$, we get the equations in Table~\ref{t:QpiEqns}.

\begin{table}[h!] 
{                                  
\renewcommand{\arraystretch}{1.5}
\begin{center}
\begin{tabular}{llll}
$\scriptstyle (12)(34)(56)$ & $z_{24}-z_{25}z_{26}$ & 
$\scriptstyle (12)(35)(46)$ & $z_{25}-z_{24}z_{26}$\\
$\scriptstyle (12)(36)(45)$ & $z_{26}-z_{24}z_{25}$ & 
$\scriptstyle (13)(24)(56)$ & $-z_{34}-z_{35}z_{36}$\\
$\scriptstyle (13)(25)(46)$ & $-z_{35}-z_{34}z_{36}$ &
$\scriptstyle (13)(26)(45)$ & $-z_{36}-z_{34}z_{35}$\\
$\scriptstyle (14)(23)(56)$ & $z_{24}z_{36}-z_{25}z_{46}$ &
$\scriptstyle (14)(25)(36)$ & $-z_{46}+z_{24}z_{56}$\\
$\scriptstyle (14)(26)(35)$ & $-z_{45}-z_{24}z_{56}$ & 
$\scriptstyle (15)(23)(46)$ & $z_{25}z_{36}-z_{24}z_{56}$\\
$\scriptstyle (15)(24)(36)$ & $-z_{56}+z_{25}z_{46}$ &
$\scriptstyle (15)(26)(34)$ & $z_{45}-z_{25}z_{46}$\\
$\scriptstyle (16)(23)(45)$ & $z_{26}z_{35}+z_{24}z_{56}$ &
$\scriptstyle (16)(24)(35)$ & $z_{56}+z_{26}z_{45}$\\
$\scriptstyle (16)(25)(34)$ & $z_{46}-z_{26}z_{45}$ \\
\end{tabular}
\end{center}
\caption{Equations defining the KV-divisors}
\label{t:QpiEqns}
}
\end{table}

We denote the equation for $Q_\pi$ by $f_\pi$. The polynomial
$f_{\pi}$ depends on several choices.  Firstly, adding any element of
the ideal $I_6$ gives the same element of $\C[M_{0,6}]$.  More seriously,
there is a dependence on the choice of $\sigma \in S_6$ with $\pi =
\sigma \cdot \textnormal{\small (12)(34)(56)}$.  For example, $\sigma
=\textnormal{\small (12)}$ gives $\pi = \textnormal{\small
  (12)(34)(56)}$, but $\textnormal{\small (12)} (z_{24}- z_{25}z_{26})
= z_{24}^{-1} - z_{25}^{-1} z_{26}^{-1} =
-f_{(12)(34)(56)}/z_{24}z_{25}z_{26}$.  This is a different element of
$\C[M_{0,6}]$.  The corresponding divisor is still $Q_{(12)(34)(56)}$,
however.  
For the explicit approach taken
here making a choice is necessary at this stage.  The final result
given in Theorem~\ref{t:maintheorem} does not depend on this choice,
however.  

\begin{remark}
We note that $Q_{\pi}$ is not the fixed locus of $\pi$ on all of $\M$.
This fixed locus contains the zero-stratum of $\M$ corresponding to
the curve with four components, three of which contain the pairs of
points $\{i,j\}$, $\{k,l\}$, and $\{m,n\}$, which is not in $Q_{\pi}$.
This can be seen, for example, by noting that $\trop(V(I_6+ \langle
f_{\pi} \rangle))$ does not intersect the cone of $\trop(M_{0,n})$
corresponding to this stratum. 
\end{remark}

The set $Y =M_{0,6}\setminus \bigcup_{\pi\in S_6}(M_{0,6}\cap Q_\pi)$
is an open subvariety of $M_{0,6}$. An easy but important consequence
of Lemma~\ref{l:Qpiequation} is the following.  Recall that $\mathcal
E =\{ \{2,4\},\dots, \{5,6\}\}$, and $\Pi$ is the set of products of
three disjoint transpositions in $S_6$.

\begin{lemma}\label{l:idealY}
The open set $Y=M_{0,6}\setminus\big(\bigcup_\pi {\mathbf
  V}(f_\pi)\big)$ is an affine variety with coordinate ring
$\C[z_{ij}^{\pm 1},u_\pi^{\pm 1}: ij \in \mathcal E, \pi \in
  \Pi]/J_{KV}$ where
\[
  J_{KV}=I_6+\langle  u_{\pi}-f_{\pi} \colon \pi \in \Pi \rangle
 \subset \C[(\mathbb C^*)^{24}].
\]
\end{lemma}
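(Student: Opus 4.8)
The plan is to start from the coordinate ring of $M_{0,6}$ given in Lemma~\ref{l:M06equations} and localize at the functions $f_\pi$, then introduce formal inverses $u_\pi$. More precisely, $Y$ is by definition the complement in $M_{0,6}$ of the union of the hypersurfaces ${\mathbf V}(f_\pi)$, so $Y$ is a basic open affine subset of the affine variety $M_{0,6}$, and hence affine, with coordinate ring the localization $\C[M_{0,6}][1/f : f = \prod_{\pi \in \Pi} f_\pi]$. The standard trick for presenting such a localization is to adjoin one new variable $u_\pi$ for each $\pi$ together with the relation $u_\pi f_\pi = 1$; but since we already have the $z_{ij}$ inverted (we are working inside the torus $(\mathbb C^*)^9$ and then $(\mathbb C^*)^{24}$), and since each $f_\pi$ is, up to a unit monomial in the $z_{ij}$, the function cutting out $Q_\pi \cap M_{0,6}$, it is cleaner to adjoin $u_\pi$ with the relation $u_\pi = f_\pi$ and simultaneously invert $u_\pi$. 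This is exactly the presentation asserted in the statement: $\C[z_{ij}^{\pm 1}, u_\pi^{\pm 1}]/(I_6 + \langle u_\pi - f_\pi \rangle)$.

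The key steps, in order, are as follows. First, recall from Lemma~\ref{l:M06equations} that $\C[M_{0,6}] = \C[z_{ij}^{\pm 1} : ij \in \mathcal E]/I_6$, an affine (indeed torus-localized) ring. Second, observe that $Y = M_{0,6} \setminus \bigcup_\pi {\mathbf V}(f_\pi)$ is the principal open subset $D(g)$ where $g = \prod_{\pi \in \Pi} f_\pi$, hence affine with coordinate ring $\C[M_{0,6}]_g$. Third, use the elementary ring-theoretic fact that $R_g \cong R[t]/(tg - 1)$, and more generally, if $g = g_1 \cdots g_r$ then $R_g \cong R[t_1, \dots, t_r]/(t_1 g_1 - 1, \dots, t_r g_r - 1)$, since inverting all the $g_i$ is the same as inverting their product. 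Fourth, perform the substitution $t_\pi = u_\pi^{-1}$: the relation $u_\pi^{-1} f_\pi = 1$ in the Laurent ring $\C[\ldots, u_\pi^{\pm 1}]$ is equivalent to $u_\pi = f_\pi$. This identifies $\C[M_{0,6}]_g$ with $\C[z_{ij}^{\pm 1}, u_\pi^{\pm 1}]/(I_6 + \langle u_\pi - f_\pi : \pi \in \Pi\rangle)$, which is $J_{KV}$. Finally, one should check that each $f_\pi$ really is (up to a monomial unit in the $z_{ij}$, hence a unit in $\C[M_{0,6}]_g$) a defining equation for $Q_\pi \cap M_{0,6}$, so that $D(g)$ genuinely equals $Y$; for $\pi = (12)(34)(56)$ this is Lemma~\ref{l:Qpiequation}, and for the other $\pi$ it follows by applying the $S_6$-action (Table~\ref{t:S6action1}), which is how Table~\ref{t:QpiEqns} was produced.

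The only genuine subtlety — and the step I would flag as requiring care rather than the routine localization bookkeeping — is the dependence of $f_\pi$ on choices. As the text preceding the lemma explains, $f_\pi$ is only well-defined up to adding elements of $I_6$ and up to multiplication by a unit monomial in the $z_{ij}$ (coming from the choice of $\sigma$ with $\pi = \sigma \cdot (12)(34)(56)$). I would remark that none of these ambiguities affects the statement: adding an element of $I_6$ changes $f_\pi$ by something already in the ideal, and multiplying $f_\pi$ by a Laurent monomial $z^a$ in the $z_{ij}$ changes the relation $u_\pi - f_\pi$ to $u_\pi - z^a f_\pi$, which after the invertible change of variable $u_\pi \mapsto z^{-a} u_\pi$ (an automorphism of the Laurent ring) returns the same quotient ring up to isomorphism. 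So the ideal $J_{KV}$ is well-defined up to the obvious torus automorphisms, and in particular $\Spec$ of it is canonically $Y$. With that caveat dispatched, the proof is just the chain of ring isomorphisms above.
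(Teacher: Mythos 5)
Your proof is correct and follows essentially the same route as the paper's: both rest on the elementary fact that removing a hypersurface $V(f)$ from an affine variety $X$ yields an affine variety with coordinate ring $K[X][u^{\pm 1}]/\langle u-f\rangle$, applied to all the $f_\pi$ (the paper does this inductively one $f_\pi$ at a time, you via localization at $\prod_{\pi}f_\pi$ followed by the change of variables $t_\pi=u_\pi^{-1}$). Your additional remarks identifying $V(f_\pi)$ with $Q_\pi\cap M_{0,6}$ and addressing the choice-dependence of $f_\pi$ are harmless extras not required for the lemma as stated.
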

\begin{proof}
For an affine variety $X$ with coordinate ring $K[X]$ and an
irreducible divisor $V(g) \subset X$ the complement $X \setminus V(g)$
has coordinate ring $K[X][u^{\pm 1}]/\langle u-f \rangle$.  The lemma
follows from applying this inductively to the $f_{\pi}$.
\end{proof}

The $S_6$-action on the variables $z_{ij}$ induces an action on the
variables $u_{\pi}$ as well, by imposing the condition $\sigma \cdot
(u_{\pi}-f_{\pi}) = m(u_{\sigma \pi \sigma^{-1}}-f_{\sigma \pi
  \sigma^{-1}})$ for some term $m$.  For example, when $\pi =
(12)(34)(56)$ and $\sigma = (12)$, we have $(12) f_{\pi} =
-1/(z_{24}z_{25}z_{26}) f_{\pi}$, so $(12) u_{\pi} =
-1/(z_{24}z_{25}z_{26}) u_{\pi}$.

\section{Equations for Cox rings}
\label{s:coxequations}

In this section we outline an approach to computing relations for the
Cox ring of a Mori dream space when a set of generators is already
known, and begin to apply this for the case of $\M$.

We first recall notation for Cox rings.  Throughout this section $X$
is a normal $\mathbb Q$-factorial projective variety defined over a field $K$  with
$\Pic(X)_{\mathbb Q} \cong \mathbb Z^r$.
Fix divisors $E_1,\dots,E_r$ whose classes form a basis for
$\Pic(X)_{\mathbb Q}$.  The Cox ring of $X$ is
$$\Cox(X) = \oplus_{\mathbf a \in \mathbb Z^r} H^0(X,\mathcal
O(a_1E_1+\dots + a_r E_r).$$

 The variety $X$ is a {\em Mori dream space}~\cite{HuKeelMDS} if
 $\Cox(X)$ is finitely generated, so we can write $\Cox(X) = S/I$,
 where $S = K[x_1,\dots,x_s]$ for sections $x_i \in
 H^0(X,\mathcal O(D_i))$ of divisors $D_1,\dots,D_S$.  This
 description does involve choosing sections $x_i$, but in many cases,
 including $\M$, the $x_i$ are determined up to scalar multiple.  The
 grading $\deg(x_i)=[D_i] \in \Pic(X)$ induces an action of the torus
 $H = \Hom(\Pic(X),K^*)$ on $\mathbb A^s$ by $\phi \cdot x_i =
 \phi([D_i]) x_i$, and this restricts to an action on $V(I)$.  Fix an
 ample class $\alpha \in \Pic(X)$, and let $X_{\Sigma}$ be the toric
 variety $\mathbb A^{s} \git_{\alpha} H$.  This has torus $T=
 (K^*)^s/H$.  We assume that $\alpha$ is chosen sufficiently
 generically so that the fan $\Sigma$ is simplicial.  This is possible
 as nonsimplicial $\Sigma$ only arise from $\alpha$ on a union
 of hyperplanes in $\Pic(X) \otimes \mathbb R$.  Then $X = V(I)
 \git_{\alpha} H$ embeds into $X_{\Sigma}$. Moreover, the restriction
 $\Pic_\Q(X_\Sigma) \rightarrow \Pic_\Q(X)$ is an isomorphism and it
 induces an isomorphism of the pseudo-effective cones
 $\Eff(X_\Sigma)\rightarrow \Eff(X)$ \cite[Proposition
   2.11]{HuKeelMDS}.

\begin{example} \label{e:M06embedding}
When $X=\M$, the Cox ring has $40$ generators as we now recall.  Let
$\mathcal I = \{ I \subseteq [6] : 2 \leq |I| \leq 3 \text{ and } 1
\in I \text{ if } |I| =3 \}$ be the set of partitions indexing the
boundary divisors in $\M$ and let $\Pi = \{ (ij)(kl)(mn) \} \subseteq
S_6$ be the permutations indexing the Keel-Vermeire divisors.
Consider the polynomial ring $S = \mathbb C[x_I, y_{\pi} : I \in
  \mathcal I, \pi \in \Pi]$ graded by $\deg(x_I)=\delta_{I}$ and
$\deg(y_\pi)=Q_\pi$. Then Theorem~\ref{t:Castravet} implies that there
is a graded surjective algebra homomorphism $S\rightarrow
\Cox(\M)$. Let $I_{\M}$ be its kernel and $H=\Hom(\Pic(\M),\mathbb
C^*) \cong T^{16}$. Then $H$ acts on $\Spec(S)=\mathbb A^{40}$ and
given a sufficiently general ample class $\alpha \in \Pic(\M)$ the GIT
quotient $X_\Sigma=\mathbb A^{40}\git_\alpha H$ is a 24-dimensional
simplicial toric variety with $\M =V(I_{\M})\git_\alpha H \subset
\mathbb A^{40} \git_\alpha H$ \cite[Proposition 2.11]{HuKeelMDS}.  The
intersection of $\M$ with the torus $T^{24}$ of the torus
$X_{\Sigma}:= \mathbb A^{40} \git_\alpha H$ equals $M_{0,6} \setminus
\bigcup_{\pi \in \Pi} Q_{\pi}$.
\end{example}

The next proposition explains how to compute the ideal $I$ of
relations between the generators of the Cox ring.  Recall that when
$X$ is a Mori Dream space the Mori chambers coincide with the GIT chambers
for $X = V(I) \git_{\alpha} H$ \cite[Theorem 2.4]{HuKeelMDS}.  Any
non-moving divisor on the toric variety $X_{\Sigma} =\mathbb A^s
\git_{\alpha} H$ restricts to a non-moving divisor on $X$, so since
the moving cone of $X$ is full-dimensional, the same is true for
$X_{\Sigma}$.  This implies that the polynomial ring $S$ is the Cox
ring of $X_\Sigma$.  One consequence of this
is that the divisors on $X$ defined by generators of $\Cox(X)$ are in
bijection with the torus invariant divisors on $X_{\Sigma}$.

We write $S_{\mathbf{x}}$ for the localization of $S$ at the product
of the variables.  By \cite{CoxToricTotalCoordinate}
$(S_{\mathbf{x}})_{\mathbf{0}}$ is the coordinate ring of the torus
$T$ of $X_{\Sigma}$, where the subscript denotes the degree zero part
in the $\Pic(X)$ grading.

\begin{proposition} \label{p:CoxRingRelations}
Let $X$ be a normal projective Mori Dream Space 
and
let $ X \hookrightarrow X_{\Sigma}$ be a toric embedding given by
choosing $\alpha \in \Pic(X)$ ample.  
Write $E_i$ for the effective divisor
on $X$ corresponding to the $i$th ray of $\Sigma$.
Let $Y= X \cap T$, where $T \cong (K^*)^n$ is the torus of
$X_{\Sigma}$.  Let $I(Y) \subset K[z_1^{\pm 1},\dots,z_n^{\pm 1}]$ be
the ideal of $Y$.  Then $\Cox(X) \cong S/I$, where 
$$I = (\phi(I(Y))S_{\mathbf{x}})\cap S,$$ 
and $\phi \colon K[T]
 \rightarrow S_{\mathbf{x}}$ is the homomorphism 
$\phi \colon K[z_1^{\pm 1},\dots,z_n^{\pm 1}] \rightarrow S_{\mathbf{x}}$ given 
by $$\phi(z_i) = \prod_{j=1}^s x_j^{\ord_{E_j}(z_i)}.$$
\end{proposition}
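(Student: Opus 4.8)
The plan is to identify $\Cox(X)$ with a subring of $S_{\mathbf{x}}$, namely the $\Pic(X)$-homogeneous elements whose image in $K[T]$ lies in $K[Y]=K[z^{\pm 1}]/I(Y)$, and then show this subring is exactly $S/I$ for the stated $I$. First I would recall that $S$ is the Cox ring of the toric variety $X_{\Sigma}$ (established in the discussion preceding the proposition), so that, by the description of Cox rings of toric varieties and the fact that $Y = X\cap T$ is a closed subvariety of the torus $T$, there is a commutative picture: $S$ maps to $\Cox(X)$ surjectively with kernel $I_{\M}=:I'$, and the localization $S_{\mathbf{x}}$ has degree-zero part $K[T]$. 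The map $\phi$ in the statement is precisely the composite $K[T]=(S_{\mathbf{x}})_{\mathbf 0}\hookrightarrow S_{\mathbf{x}}$ written out in coordinates, using that $\ord_{E_j}(z_i)$ records the exponent of $x_j$ when the Laurent monomial $z_i$ is expressed in the $x_j$; this is the toric statement that the $i$th torus coordinate on $T$ pulls back to $\prod_j x_j^{\ord_{E_j}(z_i)}$ on the big torus $(K^*)^s$, descended to $(K^*)^s/H=T$.

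Next I would argue the two inclusions. For $I\subseteq (\phi(I(Y))S_{\mathbf{x}})\cap S$: any $f\in I$ vanishes on $X=V(I)$, hence on the dense open $Y=X\cap T$; restricting to $T$ and using that the restriction map $S\to K[T]\otimes(\text{monomials})$ factors through $\phi$ shows $\phi$ applied to the image of $f$ lies in $I(Y)\cdot(\text{units})$, so $f\in \phi(I(Y))S_{\mathbf x}$, and of course $f\in S$. For the reverse inclusion $(\phi(I(Y))S_{\mathbf{x}})\cap S\subseteq I$: take $g\in S$ homogeneous whose image in $S_{\mathbf{x}}$ lies in $\phi(I(Y))S_{\mathbf{x}}$. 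Clearing denominators, $x^{\mathbf a} g\in \phi(I(Y))S$ for some monomial $x^{\mathbf a}$; since $\phi(I(Y))S$ consists of functions vanishing on $V(I)\cap T = Y$, the function $g$ vanishes on $Y$. Because $Y$ is dense in $X$ and $X=V(I)$ with $I$ the full ideal of relations (the kernel of $S\to\Cox(X)$), and since $\Cox(X)$ is in particular a domain once we note $X$ is irreducible — so $x^{\mathbf a}g\in I$ forces $g\in I$ — we conclude $g\in I$. This identifies $I$ exactly, and hence $\Cox(X)=S/I$.

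The main obstacle I anticipate is the bookkeeping that makes $\phi$ well-defined and makes the ``restrict to $Y$'' step rigorous: one must check that $\phi(z_i)=\prod_j x_j^{\ord_{E_j}(z_i)}$ indeed lands in $(S_{\mathbf x})_{\mathbf 0}$, i.e. that $\sum_j \ord_{E_j}(z_i)\,[D_j]=0$ in $\Pic(X)$ (this is the statement that $z_i$, being a rational function on $X$, has principal divisor, so its class is zero), and that $\phi$ so defined is a ring isomorphism $K[T]\to (S_{\mathbf x})_{\mathbf 0}$ compatible with the quotient $S\twoheadrightarrow \Cox(X)$ and its restriction to $Y$. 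Equivalently, one is invoking the commutative diagram relating the Cox construction of $X_\Sigma$, the quotient presentation $X = V(I)\git_\alpha H\hookrightarrow X_\Sigma = \mathbb A^s\git_\alpha H$, and the inclusion $Y\hookrightarrow T$; the content is that $\operatorname{ord}_{E_j}$ on rational functions of $X$ matches the valuation coming from the $j$th coordinate hyperplane of $\mathbb A^s$ under this quotient, which is standard for Mori dream spaces but needs to be stated carefully. Once that compatibility is in hand, the two inclusions above are formal, using only that $X$ is irreducible (so $I$ is prime, or at least that $x^{\mathbf a}g\in I\Rightarrow g\in I$ since the $x_j$ are nonzerodivisors modulo $I$, as they are sections of divisors not containing $X$) and that $Y$ is dense in $X$.
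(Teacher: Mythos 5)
Your overall skeleton matches the paper's: use that $S$ is the Cox ring of $X_\Sigma$ and $(S_{\mathbf{x}})_{\mathbf{0}}\cong K[T]$, use primality of $I$ (the Cox ring is a domain) to handle saturation by $\mathbf{x}$, and reduce everything to the claim that the explicitly defined $\phi$ implements the canonical identification of $K[T]$ with $(S_{\mathbf{x}})_{\mathbf{0}}$ compatibly with $Y\subset X$. Your two-inclusion argument is an acceptable substitute for the paper's citation of \cite{GibneyMaclaganEquations}*{Proposition 2.3}. The problem is that you defer exactly the step that is the actual content of the proposition. The whole point of the statement is that the exponents in $\phi(z_i)$ may be computed as $\ord_{E_j}(z_i)$ \emph{on $X$}; asserting that this "matches the valuation coming from the $j$th coordinate hyperplane of $\mathbb{A}^s$ under the quotient" and is "standard for Mori dream spaces" is not a proof, and the route you indicate is itself delicate: it amounts to comparing $\ord_{E_j}$ on $X$ with $\ord_{D_j}$ on $X_\Sigma$, i.e.\ to controlling how the toric boundary divisors restrict to $X$ (multiplicities and extra components are not automatically excluded). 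The paper avoids this entirely by arguing on $X$ alone: both $z_i$ and $\phi(z_i)$ are units on $Y$, so their divisors on $X$ are supported on $X\setminus Y=\bigcup_j E_j$; since $\divv(x_j)=E_j$, one has $\divv(\phi(z_i))=\sum_j \ord_{E_j}(z_i)E_j=\divv(z_i)$ by construction, and a rational function is determined by its divisor up to a scalar.

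That scalar is the second point you overlook: your sketch claims $\phi$ is compatible with restriction to $Y$ on the nose, but the divisor argument only gives $\phi(z_i)=c_i\,z_i$ as rational functions on $X$ for some $c_i\in K^*$, so $\phi$ agrees with the canonical isomorphism only after rescaling the variables $x_j$ (equivalently the $z_i$). Without addressing this, your "restrict to $Y$" steps would literally produce the ideal of a torus translate of $Y$. The statement survives because the presentation $\Cox(X)\cong S/I$ is itself only defined up to multiplying the chosen sections $x_j$ by scalars, a point the paper makes explicitly and that your argument needs as well (you also implicitly use, and should state, that $I$ is $\Pic(X)$-homogeneous, that the classes $[D_j]$ generate $\Pic(X)$ so that every homogeneous element of $S_{\mathbf{x}}$ is a Laurent monomial times a degree-zero element, and that $Y$ is dense in $X$ because $X\setminus T=\bigcup_j E_j$ is a divisor). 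With the divisor argument and the rescaling remark supplied, your proof closes up and coincides in substance with the paper's.
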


\begin{proof}
The Cox ring of a normal projective variety is a domain \cite[Proposition 1.4.1.5]{CoxRingBook}.
This means that $I$ is a prime ideal, so $I=IS_{\mathbf{x}} \cap S$.
Note that $IS_{\mathbf{x}}$ is generated by its degree zero part
$(IS_{\mathbf{x}})_0$, so it suffices to show that the map $\phi$
identifies $\phi(I(Y))$ with $(IS_{\mathbf{x}})_0$.  By
\cite[Proposition 2.3]{GibneyMaclaganEquations} it in turn suffices to
show that $\phi$ induces an isomorphism $\Spec(K[z_1^{\pm
    1},\dots,z_n^{\pm 1}]) \cong T \subset X_{\Sigma}$. 

The coordinate function $z_i$ is a unit in $K[Y]$, and so a rational
function on $X$.  Its image $\phi(z_i)$ is also a unit in $K[Y]$.
This means that the divisors $\divv(z_i)$ and $\divv(\phi(z_i))$ on $X$
are supported on the boundary $X \setminus Y = X \setminus T =
\cup_{i=1}^s E_i$.  A rational function $f$ is determined up to scalar
multiplication by its divisor $\divv(f)$.  
 As the ideal $I$ is also defined only up to the multiplication of the
 variables $x_i$ by scalars, it thus suffices to show that
 $\ord_{E_i}(z_j) = \ord_{E_i}(\phi(z_j))$.  This is the case by
 construction.
\end{proof}

To apply Proposition~\ref{p:CoxRingRelations} to $\M$ we need to
compute $\ord_D(z_{ij})$ for $D \in \{ \delta_I, Q_{\pi} : I \in \mathcal I, \pi \in  \Pi\}$ and $\{i,j\}
\in \mathcal E$.  Our method relies on the calculation of the classes
of these divisors $D$ in an explicit basis for $\Pic(\M)$.  As we rely
heavily on the $S_6$-action, we choose a description in which this
action is transparent.  Fix a copy of $\mathbb Z^{16}$ with basis
labelled by $\{\mathbf{e}_i : 1 \leq i \leq 6 \}$ and
$\{\mathbf{e}_{1ij} : 2 \leq i<j \leq 6 \}$.  Set the boundary divisor
$\delta_{1ij}$ equal to $2 \mathbf{e}_{1ij}$, and $\delta_{ij}$ equal
to $\mathbf{e}_i+\mathbf{e}_j - \sum_{i,j \in \{1,k,l\} \text{ or }
  i,j \not \in \{1,k,l\}} \mathbf{e}_{1kl}$.  
In this basis, for 
\small$\pi=(1m)(ij)(kl)$ \normalsize, we have $Q_{\pi} = \sum_{i\in [6]}\mathbf{e}_i-2\sum_{r\in\{i,j\}\text{ and }s\in \{k,l\}}\mathbf{e}_{1rs}$.
The $S_6$-action on
$\mathbb Z^{16}$ then just permutes the indices.  To check that this is indeed a description of $\Pic(\M)$ one only needs to check that the given
vectors obey the known relations between the boundary divisors.
Alternatively, we give the explicit change of basis from the Kapranov
basis for $\M$, coming from the description of $\M$ as the blow-up of
$\mathbb P^3$ at $5$ points and $10$ lines.  This has basis the
exceptional divisors $E_i, E_{jk}$ for $1 \leq i \leq 5$, $1 \leq j<k
\leq 5$, and the pull-back $\psi_6 = H$ of the hyperplane class on
$\mathbb P^3$.
 The change of basis between
these two bases is given by the formulas: 
\begin{equation*}
\begin{split}
\mathbf{e}_i&=\frac{1}{2}\big(H+E_i-\sum_{j\in [5]\setminus\{i\}}E_j+\frac{1}{2}(\sum_{j\in [5]\setminus\{i\}}E_{ij}-\sum_{j,k\in [5]\setminus\{i\}}E_{jk}\quad)\big),\quad \text{if } i<6,\\
\mathbf{e}_6&=\frac{1}{2}\big(-H+\sum_{i\in[5]}E_i+\frac{1}{2}\sum_{i,j\in[5]}E_{ij}\big),
\end{split}
\end{equation*}
and for $\{i,j\} \subset \{2,\dots ,6\}$, and $i<j$,
\[
\mathbf{e}_{1ij} =
\begin{cases}
1/2 E_{kl} & \text{if } j<6\text{ and }\{1,i,j,k,l\}=[5]  \\
1/2 E_{1i} & \text{if } j=6
\end{cases}.
\]

Let $A$ be the $16\times
40$-matrix with columns indexed by the boundary divisors $\delta_I$
and $Q_\pi$, in the order
\[
\{12\},\dots,\{56\},\{123\},\dots ,\{156\},(12)(34)(56),\dots ,(16)(25)(34),
\]
and such that $A_I=[\delta_I]$ and $A_\pi=[Q_\pi] \in \Pic(\M)$ in this basis.  
Explicitly, $A$ has the form
\[
A=(A_{\text{bnd}}|A_{KV}) 
\]
where $A_{bnd}$ is the matrix 
\begin{equation}
\label{e:matrixGb}
A_{bnd}=\left(
\begin{smallmatrix}
1&1&1&1&1&0&0&0&0&0&0&0&0&0&0&0&0&0&0&0&0&0&0&0&0\\
1&0&0&0&0&1&1&1&1&0&0&0&0&0&0&0&0&0&0&0&0&0&0&0&0\\
0&1&0&0&0&1&0&0&0&1&1&1&0&0&0&0&0&0&0&0&0&0&0&0&0\\
0&0&1&0&0&0&1&0&0&1&0&0&1&1&0&0&0&0&0&0&0&0&0&0&0\\
0&0&0&1&0&0&0&1&0&0&1&0&1&0&1&0&0&0&0&0&0&0&0&0&0\\
0&0&0&0&1&0&0&0&1&0&0&1&0&1&1&0&0&0&0&0&0&0&0&0&0\\
{-1}&{-1}&0&0&0&{-1}&0&0&0&0&0&0&{-1}&{-1}&{-1}&2&0&0&0&0&0&0&0&0&0\\
{-1}&0&{-1}&0&0&0&{-1}&0&0&0&{-1}&{-1}&0&0&{-1}&0&2&0&0&0&0&0&0&0&0\\
{-1}&0&0&{-1}&0&0&0&{-1}&0&{-1}&0&{-1}&0&{-1}&0&0&0&2&0&0&0&0&0&0&0\\
{-1}&0&0&0&{-1}&0&0&0&{-1}&{-1}&{-1}&0&{-1}&0&0&0&0&0&2&0&0&0&0&0&0\\
0&{-1}&{-1}&0&0&0&0&{-1}&{-1}&{-1}&0&0&0&0&{-1}&0&0&0&0&2&0&0&0&0&0\\
0&{-1}&0&{-1}&0&0&{-1}&0&{-1}&0&{-1}&0&0&{-1}&0&0&0&0&0&0&2&0&0&0&0\\
0&{-1}&0&0&{-1}&0&{-1}&{-1}&0&0&0&{-1}&{-1}&0&0&0&0&0&0&0&0&2&0&0&0\\
0&0&{-1}&{-1}&0&{-1}&0&0&{-1}&0&0&{-1}&{-1}&0&0&0&0&0&0&0&0&0&2&0&0\\
0&0&{-1}&0&{-1}&{-1}&0&{-1}&0&0&{-1}&0&0&{-1}&0&0&0&0&0&0&0&0&0&2&0\\
0&0&0&{-1}&{-1}&{-1}&{-1}&0&0&{-1}&0&0&0&0&{-1}&0&0&0&0&0&0&0&0&0&2
\end{smallmatrix}\right)
\end{equation}
and 
\begin{equation}
\label{e:matrixKV}
A_{KV}=\left(
\begin{smallmatrix}1&1&1&1&1&1&1&1&1&1&1&1&1&1&1\\
1&1&1&1&1&1&1&1&1&1&1&1&1&1&1\\
1&1&1&1&1&1&1&1&1&1&1&1&1&1&1\\
1&1&1&1&1&1&1&1&1&1&1&1&1&1&1\\
1&1&1&1&1&1&1&1&1&1&1&1&1&1&1\\
1&1&1&1&1&1&1&1&1&1&1&1&1&1&1\\
0&0&0&0&0&0&0&{-2}&{-2}&0&{-2}&{-2}&0&{-2}&{-2}\\
0&0&0&0&{-2}&{-2}&0&0&0&{-2}&0&{-2}&{-2}&0&{-2}\\
0&0&0&{-2}&0&{-2}&{-2}&0&{-2}&0&0&0&{-2}&{-2}&0\\
0&0&0&{-2}&{-2}&0&{-2}&{-2}&0&{-2}&{-2}&0&0&0&0\\
0&{-2}&{-2}&0&0&0&0&0&0&{-2}&{-2}&0&{-2}&{-2}&0\\
{-2}&0&{-2}&0&0&0&{-2}&{-2}&0&0&0&0&{-2}&0&{-2}\\
{-2}&{-2}&0&0&0&0&{-2}&0&{-2}&{-2}&0&{-2}&0&0&0\\
{-2}&{-2}&0&{-2}&{-2}&0&0&0&0&0&0&0&0&{-2}&{-2}\\
{-2}&0&{-2}&{-2}&0&{-2}&0&0&0&0&{-2}&{-2}&0&0&0\\
0&{-2}&{-2}&0&{-2}&{-2}&0&{-2}&{-2}&0&0&0&0&0&0
\end{smallmatrix}\right)
\end{equation}

We then get the following description of the matrix $R$ that induces
the morphism $\phi$ of Proposition~\ref{p:CoxRingRelations}.

\begin{lemma}  \label{l:Rdefinition}
Let $R$ be the matrix with columns indexed by $\mathcal I$ and $\Pi$ ordered as before, and rows indexed by the generators $z_{ij},u_\pi$ of $\C[Y]$, and entries 
\begin{equation}
\label{Rdefinition}
\begin{split}
R_{ij,I}&=\ord_{\delta_I}(z_{ij}),\\
R_{ij,\pi}&=\ord_{Q_\pi}(z_{ij}),\\
R_{\pi,I}&=\ord_{\delta_I}(u_\pi),\\
R_{\pi_1,\pi_2}&=\ord_{Q_{\pi_2}}(u_{\pi_1}).
\end{split}
\end{equation}
Then
\begin{equation}
\label{e:matrixR}
R=
\left(\begin{smallarray}{ccccccccccccccccccccccccc|ccccccccccccccc}
0& 1& -1& 0& 0& -1& 1& 0& 0& 0& 0& 0& 0& 0& 0& 0& 0& 0& 0& 0& 1& 1& -1& -1& 0& 0& 0& 0& 0& 0& 0& 0& 0& 0& 0& 0& 0& 0& 0& 0\\
 0& 1& 0& -1& 0& -1& 0& 1& 0& 0& 0& 0& 0& 0& 0& 0& 0& 0& 0& 1& 0& 1& -1& 0& -1& 0& 0& 0& 0& 0& 0& 0& 0& 0& 0& 0& 0& 0& 0& 0\\
 0& 1& 0& 0& -1& -1& 0& 0& 1& 0& 0& 0& 0& 0& 0& 0& 0& 0& 0& 1& 1& 0& 0& -1& -1& 0& 0& 0& 0& 0& 0& 0& 0& 0& 0& 0& 0& 0& 0& 0\\
 1& 0& -1& 0& 0& -1& 0& 0& 0& 1& 0& 0& 0& 0& 0& 0& 0& 1& 1& 0& 0& 0& -1& -1& 0& 0& 0& 0& 0& 0& 0& 0& 0& 0& 0& 0& 0& 0& 0& 0\\
 1& 0& 0& -1& 0& -1& 0& 0& 0& 0& 1& 0& 0& 0& 0& 0& 1& 0& 1& 0& 0& 0& -1& 0& -1& 0& 0& 0& 0& 0& 0& 0& 0& 0& 0& 0& 0& 0& 0& 0\\
 1& 0& 0& 0& -1& -1& 0& 0& 0& 0& 0& 1& 0& 0& 0& 0& 1& 1& 0& 0& 0& 0& 0& -1& -1& 0& 0& 0& 0& 0& 0& 0& 0& 0& 0& 0& 0& 0& 0& 0\\
 1& 1& -1& -1& 0& -1& 0& 0& 0& 0& 0& 0& 1& 0& 0& 1& 0& 0& 1& 0& 0& 1& -1& -1& -1& 0& 0& 0& 0& 0& 0& 0& 0& 0& 0& 0& 0& 0& 0& 0\\
 1& 1& -1& 0& -1& -1& 0& 0& 0& 0& 0& 0& 0& 1& 0& 1& 0& 1& 0& 0& 1& 0& -1& -1& -1& 0& 0& 0& 0& 0& 0& 0& 0& 0& 0& 0& 0& 0& 0& 0\\
 1& 1& 0& -1& -1& -1& 0& 0& 0& 0& 0& 0& 0& 0& 1& 1& 1& 0& 0& 1& 0& 0& -1& -1& -1& 0& 0& 0& 0& 0& 0& 0& 0& 0& 0& 0& 0& 0& 0& 0\\
 \noalign{\vskip1pt\hrule\vskip1pt}
 1& 1& -1& -1& -1& -2& 0& 0& 0& 0& 0& 0& 0& 0& 0& 0& 0& 0& 0& 0& 1& 1& -1& -1& -2& 1& 0& 0& 0& 0& 0& 0& 0& 0& 0& 0& 0& 0& 0& 0\\
 1& 1& -1& -1& -1& -2& 0& 0& 0& 0& 0& 0& 0& 0& 0& 0& 0& 0& 0& 1& 0& 1& -1& -2& -1& 0& 1& 0& 0& 0& 0& 0& 0& 0& 0& 0& 0& 0& 0& 0\\
 1& 1& -1& -1& -1& -2& 0& 0& 0& 0& 0& 0& 0& 0& 0& 0& 0& 0& 0& 1& 1& 0& -2& -1& -1& 0& 0& 1& 0& 0& 0& 0& 0& 0& 0& 0& 0& 0& 0& 0\\
 1& 1& -1& -1& -1& -2& 0& 0& 0& 0& 0& 0& 0& 0& 0& 0& 0& 1& 1& 0& 0& 0& -1& -1& -2& 0& 0& 0& 1& 0& 0& 0& 0& 0& 0& 0& 0& 0& 0& 0\\
 1& 1& -1& -1& -1& -2& 0& 0& 0& 0& 0& 0& 0& 0& 0& 0& 1& 0& 1& 0& 0& 0& -1& -2& -1& 0& 0& 0& 0& 1& 0& 0& 0& 0& 0& 0& 0& 0& 0& 0\\
 1& 1& -1& -1& -1& -2& 0& 0& 0& 0& 0& 0& 0& 0& 0& 0& 1& 1& 0& 0& 0& 0& -2& -1& -1& 0& 0& 0& 0& 0& 1& 0& 0& 0& 0& 0& 0& 0& 0& 0\\
 1& 1& -1& -1& -1& -2& 0& 0& 0& 0& 0& 0& 0& 0& 0& 0& 0& 1& 1& 0& 1& 1& -2& -2& -2& 0& 0& 0& 0& 0& 0& 1& 0& 0& 0& 0& 0& 0& 0& 0\\
 1& 1& -1& -1& -1& -2& 0& 0& 0& 0& 0& 0& 0& 0& 0& 1& 0& 0& 1& 0& 1& 0& -2& -2& -1& 0& 0& 0& 0& 0& 0& 0& 1& 0& 0& 0& 0& 0& 0& 0\\
 1& 1& -1& -1& -1& -2& 0& 0& 0& 0& 0& 0& 0& 0& 0& 1& 0& 1& 0& 0& 0& 1& -2& -2& -1& 0& 0& 0& 0& 0& 0& 0& 0& 1& 0& 0& 0& 0& 0& 0\\
 1& 1& -1& -1& -1& -2& 0& 0& 0& 0& 0& 0& 0& 0& 0& 0& 1& 0& 1& 1& 0& 1& -2& -2& -2& 0& 0& 0& 0& 0& 0& 0& 0& 0& 1& 0& 0& 0& 0& 0\\
 1& 1& -1& -1& -1& -2& 0& 0& 0& 0& 0& 0& 0& 0& 0& 1& 0& 0& 1& 1& 0& 0& -2& -1& -2& 0& 0& 0& 0& 0& 0& 0& 0& 0& 0& 1& 0& 0& 0& 0\\
 1& 1& -1& -1& -1& -2& 0& 0& 0& 0& 0& 0& 0& 0& 0& 1& 1& 0& 0& 0& 0& 1& -2& -1& -2& 0& 0& 0& 0& 0& 0& 0& 0& 0& 0& 0& 1& 0& 0& 0\\
 1& 1& -1& -1& -1& -2& 0& 0& 0& 0& 0& 0& 0& 0& 0& 0& 1& 1& 0& 1& 1& 0& -2& -2& -2& 0& 0& 0& 0& 0& 0& 0& 0& 0& 0& 0& 0& 1& 0& 0\\
 1& 1& -1& -1& -1& -2& 0& 0& 0& 0& 0& 0& 0& 0& 0& 1& 0& 1& 0& 1& 0& 0& -1& -2& -2& 0& 0& 0& 0& 0& 0& 0& 0& 0& 0& 0& 0& 0& 1& 0\\
 1& 1& -1& -1& -1& -2& 0& 0& 0& 0& 0& 0& 0& 0& 0& 1& 1& 0& 0& 0& 1& 0& -1& -2& -2& 0& 0& 0& 0& 0& 0& 0& 0& 0& 0& 0& 0& 0& 0& 1
\end{smallarray}\right)
\end{equation}
\end{lemma}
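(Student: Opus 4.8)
The plan is to compute the matrix $R$ one row at a time: the row indexed by $z_{ij}$ (for $ij\in\mathcal E$) records $\ord_{\delta_I}(z_{ij})$ and $\ord_{Q_\pi}(z_{ij})$, and the row indexed by $u_\pi$ records $\ord_{\delta_I}(u_\pi)$ and $\ord_{Q_{\pi'}}(u_\pi)$, over all $I\in\mathcal I$ and $\pi'\in\Pi$. Two structural facts will be used throughout. First, each of $z_{ij}$ and $u_\pi$ is a nonzero rational function on $\M$, so its divisor is principal, hence linearly equivalent to $0$; expressed in the fixed basis of $\Pic(\M)$, this says every row of $R$ lies in $\ker A$. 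Since $\rank A=16$ the lattice $\ker A$ has rank $24$, and in fact (from the standard toric exact sequence $0\to M\to\Z^{40}\xrightarrow{A}\Pic(\M)\to 0$ for $X_\Sigma$) the $24$ rows of $R$ form a $\Z$-basis of $\ker A$; I will use this both to fill in entries and as a final consistency check, namely $AR^{\mathsf T}=0$. Second, the $S_6$-action on the $z_{ij}$ from Table~\ref{t:S6action1}, the induced action on the $u_\pi$, and the $S_6$-action on the $\delta_I$ and $Q_\pi$ make $R$ equivariant, so it is enough to compute one representative from each $S_6$-orbit of rows.

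For the rows indexed by $z_{ij}$: since $z_{ij}$ is a unit in $\C[M_{0,6}]$ and the generic point of each $Q_\pi$ lies in $M_{0,6}$, we have $\ord_{Q_\pi}(z_{ij})=0$, so the last $15$ entries of each such row vanish. For the boundary entries, I would use that by~\eqref{eqtn:zij} each $z_{ij}$ is a Laurent monomial in the Pl\"ucker coordinates, together with the expression of the Pl\"ucker coordinates as products of boundary sections coming from the blow-up description of $\M$: $p_{1k}=x_{1k}$, and $p_{kl}=x_{kl}\prod x_{1ab}$ for $2\le k<l\le 6$, where the product runs over the three-element sets $\{1,a,b\}$ disjoint from $\{k,l\}$ (equivalently, $\ord_{\delta_J}(p_{kl})\in\{0,1\}$ is given by this combinatorial rule). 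Substituting, each $z_{ij}$ becomes an explicit Laurent monomial in the $x_I$, and its exponent vector is the desired row; the $\mathcal E$-orbits under $S_6$ then fill in the rest.

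For the rows indexed by $u_\pi$: because $u_\pi-f_\pi\in J_{KV}$, the functions $u_\pi$ and $f_\pi$ have the same divisor on $\M$. By Lemma~\ref{l:Qpiequation} the binomial $f_\pi$ cuts out $Q_\pi$ scheme-theoretically inside $M_{0,6}$, and inspecting the explicit equation ($z_{24}-z_{25}z_{26}$ for $\pi=(12)(34)(56)$, and its $S_6$-translates) shows that this scheme is reduced and irreducible; hence $\ord_{Q_\pi}(u_\pi)=1$ while $\ord_{Q_{\pi'}}(u_\pi)=0$ for $\pi'\ne\pi$, which gives the last $15$ entries. For the boundary entries, I would write $f_\pi=\mu_1-\mu_2$ with $\mu_1,\mu_2$ monomials in the $z_{ij}$, hence (by the previous step) explicit Laurent monomials in the $x_I$; then $\ord_{\delta_I}(u_\pi)\ge\min(\ord_{\delta_I}\mu_1,\ord_{\delta_I}\mu_2)$, with equality for every $I$ outside the ``tie set'' $\mathcal T=\{I:\ord_{\delta_I}\mu_1=\ord_{\delta_I}\mu_2\}$. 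The entries indexed by $I\in\mathcal T$ would then be determined by the linear relations $\sum_I\ord_{\delta_I}(u_\pi)[\delta_I]=-[Q_\pi]$ in $\Pic(\M)$ (combined with the already-known entries), or equivalently by a direct local check: for $I\in\mathcal T$, the order exceeds the naive minimum precisely when the residues of $\mu_1$ and $\mu_2$ along $\delta_I$ agree, i.e. when $\mu_1/\mu_2$ restricts to the constant $1$ on the two-component stable curve parametrized by the generic point of $\delta_I$.

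The hard part is exactly this last point: along boundary divisors in the tie set, $f_\pi$ can vanish to order strictly greater than $\min(\ord_{\delta_I}\mu_1,\ord_{\delta_I}\mu_2)$ (for instance $f_{(12)(34)(56)}=z_{24}-z_{25}z_{26}$ vanishes along $\delta_{12}$, since $z_{24},z_{25},z_{26}$ all restrict to $1$ there), so these entries cannot simply be read off and must be extracted from the $\Pic$-relations above or from the boundary geometry. The remaining work --- substituting and clearing denominators, tracking the $S_6$-action so that the representatives in Theorem~\ref{t:maintheorem} come out with the symmetric form stated there, and verifying $AR^{\mathsf T}=0$ --- is mechanical and can be carried out with or without computer assistance.
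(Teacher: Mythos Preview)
Your approach is correct and will work, but the paper takes a cleaner route that avoids the ``hard part'' you identify. The paper first notes, as you do, that every row of $R$ lies in $\ker A$. It then observes that the explicit matrix $\tilde R$ displayed in the lemma has block form
\[
\tilde R=\begin{pmatrix} R'_1 & I_9 & R'_2 & 0\\ C_1 & 0 & C_2 & I_{15}\end{pmatrix},
\]
where the columns of the $I_9$ block are the nine $\delta_{ij}$ with $ij\in\mathcal E$ and the columns of the $I_{15}$ block are the fifteen $Q_\pi$. Since the rows of $\tilde R$ form a basis of $\ker A$, any element of $\ker A$ is determined by its entries in these $24$ columns. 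Thus to show $R=\tilde R$ it suffices to check the $24$ ``identity-block'' entries in each row, namely: (i) $\ord_{\delta_{ij}}(z_{kl})=\delta_{ij,kl}$ and $\ord_{\delta_{ij}}(f_\pi)=0$ for $ij\in\mathcal E$, and (ii) $\ord_{Q_\pi}(z_{kl})=0$ and $\ord_{Q_\pi}(f_{\pi'})=\delta_{\pi,\pi'}$. Items (ii) are exactly your arguments; items (i) follow from the toric description of the boundary (the ray for $\delta_{ij}$ with $ij\in\mathcal E$ is the standard basis vector $\mathbf e_{ij}$, so the check reduces to showing $f_\pi\notin \overline I_6+\langle z_{ij}\rangle$).

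The difference is that the paper picks out in advance the $24$ columns on which the check is easiest (all entries are $0$ or $1$), and lets the kernel relation do the rest. Your plan instead computes most entries directly via Pl\"ucker expansions and the $\min$-of-monomials bound, and then uses the kernel relation only to resolve the tie set. Both are valid; the paper's route bypasses the tie-set analysis entirely, since the nine columns $ij\in\mathcal E$ are never in the tie set of any $f_\pi$ (each $f_\pi$ has its two monomials differing by a single $z_{ij}$ factor in those columns). What your approach buys is a more direct, constructive computation of every entry, and the local criterion you state (that $\ord_{\delta_I}(f_\pi)$ exceeds the minimum exactly when $\mu_1/\mu_2$ restricts to $1$ on $\delta_I$) is a genuinely useful observation even if the paper does not need it.
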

\begin{proof}
Write $R$ for the matrix with entries given by orders of vanishing,
and $\tilde{R}$ for the matrix on the right hand side of \eqref{e:matrixR}.  The $z_{ij}$ and
$u_{\pi}$ are rational functions on $\M$ whose divisors are supported
on the union of the boundary divisors $\delta_I$ and the $Q_{\pi}$. Note that $u_{\pi}=f_{\pi}$ in $\C[Y]$.
Since $\divv(z_{ij})$ and $\divv(f_{\pi})$ are principal divisors,
$\sum_{I} \ord_{\delta_I}(z_{ij}) \delta_I + \sum_{\pi} \ord_{Q_{\pi}}(z_{ij}) Q_{\pi}
= \sum_{I} \ord_{\delta_I}(f_{\pi})\delta_I + \sum_{\pi}
\ord_{Q_{\pi}}(f_{\pi})Q_{\pi} = 0 \in \Pic(\M)$.  The columns of the
matrix $A$ are representatives for the boundary divisors and the
Keel-Vermeire divisors in $\Pic(\M)$, so this implies that $AR^T =0$.  Since the rows
of $\tilde{R}$ are a basis for $\ker(A)$, every row of $R$ lies in the
rowspace of $\tilde{R}$.

Note that $\tilde{R}$ has the block form:
\[
\tilde{R}=\left(
\begin{array}{c|c}
R' &0 \\ \hline
C &I_{15}
\end{array}\right)
\]
where $I_{15}$ is the identity matrix of rank 15 and $R'$ is the $9
\times 25$ matrix appearing in Proposition 5.2 of
\cite{GibneyMaclaganEquations}.  We can further decompose $R' = (R'_1
| I_9 | R'_2)$, where $R'_1$ is $9 \times 6$, and $R'_2$ is $9 \times
10$, and $C = (C_1 | \mathbf{0} | C_2)$, where $C_1$ is $15 \times 6$,
$\mathbf{0}$ is the $15 \times 9$ matrix with every entry $0$, and
$C_2$ is $15 \times 10$.  An element of the rowspace of $\tilde{R}$
then has four blocks of coordinates, and is determined by its values
in the second and fourth blocks, which are indexed by the
$\delta_{ij}$ with $\{i,j\} \in \mathcal E$, and by the $Q_{\pi}$ with
$\pi \in \Pi$.

To show that $R = \tilde{R}$ it thus suffices to show the following four facts:
\begin{enumerate}
\item $\ord_{\delta_{ij}}(z_{kl}) = 1$ if $ij=kl$, and $0$ otherwise,
  for $\{i,j\}, \{k,l\} \in \mathcal E$,
\item $\ord_{\delta_{ij}}(f_{\pi}) = 0$ for 
 $\{i,j\} \in \mathcal E$ and $\pi \in \Pi$,
\item $\ord_{Q_{\pi}}(z_{ij}) = 0$ for 
 $\{i,j\} \in \mathcal E$ and $\pi \in \Pi$, and
\item $\ord_{Q_{\pi}}(f_{\pi'}) = 1$ if $\pi = \pi'$, and $0$ otherwise,
  for $\pi, \pi' \in \Pi$.
\end{enumerate}

Since $Q_{\pi}$ intersects $M_{0,6}$, the last two follow from the
fact that $f_{\pi}$ is an equation for $Q_{\pi}$, and no $z_{ij}$ with
$\{i,j\} \in \mathcal E$ or $f_{\pi'}$ with $\pi' \neq \pi$ lies in the ideal $I_6 + \langle f_{\pi} \rangle$.

To see the first two, we use the description of \cite[Theorem
  5.7]{GibneyMaclaganEquations}, where it is shown that $\M$ is the
closure of $M_{0,6} \subset (\mathbb C^*)^9$ in a toric variety $X_{\Delta}$ given
by a fan $\Delta$ whose rays are spanned by the columns of $R'$.  The
intersection of $\M$ with the torus-invariant divisor corresponding
to the ray indexed by $\delta_I$ is $\delta_I$.  For $\{i,j \} \in
\mathcal E$ the ray indexed by $\delta_{ij}$ is the basis vector
$\mathbf{e}_{ij}$.  Thus the variety of $\overline{I}_6 := I_6 \cap
\C[z_{ij}, z_{kl}^{\pm 1} : \{k,l \} \neq \{i,j\}]$ in $\mathbb A^1
\times T^8$ is the union of $M_{0,6}$ and an open part of
$\delta_{ij}$.  This means that to show that $\ord_{\delta_{ij}}(g)=0$
for $g \in \C[z_{ij}, z_{kl}^{\pm 1}]/\overline{I}_6$ it suffices to
show that $g \not \in \overline{I}_6 + \langle z_{ij} \rangle$.  This
is the case for $g=z_{kl}$ with $\{k,l \} \neq \{i,j\}$, and for
$g=f_{\pi}$.  Since in addition we have $\ord_{\delta_{ij}}(z_{ij}) =
1$, this finishes the proof.
 \end{proof}

We will make extensive computational use of the $S_6$-action on
$\Cox(\M)$.  We first review how this works for automorphisms of Mori
Dream Spaces in more generality.

Let $X$ be a Mori Dream Space with Cox ring $S/I$, where $S = \mathbb
C[x_1,\dots,x_s]$.  Suppose that $G$ is a group acting on $S$ by
monomial maps: $g \cdot x_i = \alpha x^u$ for some $\alpha \in \mathbb
C$ and monomial $x^u$ with $\deg(x^u) = \deg(x_i)$.  Write $\mathbf{x}
= \prod_{i=1}^s x_i$.  The assumption that $S$ acts by monomial maps
means that the action of $G$ on $S$ extends to an action of $G$ on
$S_{\mathbf{x}}$, which then restricts to an action on the Laurent
polynomial ring $(S_{\mathbf{x}})_{\mathbf{0}}$.  Write $X^0 =
\Spec(((S/I)_{\mathbf{x}})_{\mathbf{0}})$ for the complement in $X$ of
the divisors of the $x_i$ for $1 \leq i \leq s$.  In the next lemma we
show that if the action restricts to a given action of $G$ on $X^0$,
then the $G$ action on $S$ fixes $I$.

\begin{lemma} \label{l:Gaction}
Let $X$ be a Mori dream space with Cox ring $\Cox(X) \cong S/I$ where
$S = \mathbb C[x_1,\dots,x_s]$.  Suppose that a group $G$ acts by 
degree-preserving monomial maps on $S$.  Then the ideal $I$ is invariant under the action of
$G$ if the restriction of the action of $G$ to
$(S_{\mathbf{x}})_{\mathbf{0}}$ preserves
$I^0=(IS_{\mathbf{x}})_{\mathbf{0}}$.
\end{lemma}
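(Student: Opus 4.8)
The plan is to exploit the characterization of $\Cox(X)$ as a subring of the Laurent polynomial ring $(S_{\mathbf{x}})_{\mathbf{0}}$, together with the fact that $I$ is recovered from $I^0$ by the saturation/contraction operation $I = (IS_{\mathbf{x}}) \cap S$. First I would record that, since $G$ acts on $S$ by degree-preserving monomial maps, the action extends to $S_{\mathbf{x}}$ (each $x_i$ goes to a unit times a monomial, hence is invertible in the image) and preserves the $\Pic(X)$-grading, so it restricts to an action on the degree-zero part $(S_{\mathbf{x}})_{\mathbf{0}}$. This is exactly the setup in which the hypothesis ``$G$ preserves $I^0 = (IS_{\mathbf{x}})_{\mathbf{0}}$'' makes sense.

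Next I would observe that $IS_{\mathbf{x}}$ is generated by its degree-zero part $I^0$: this is the standard fact (used already in the proof of Proposition~\ref{p:CoxRingRelations}) that a homogeneous ideal in a $\Pic(X)$-graded Laurent ring over the torus $H$ is generated in degree zero, because multiplying by the invertible monomials $x^u$ moves any homogeneous element to degree zero. Hence if $g \in G$ fixes $I^0$ setwise, then $g$ fixes the ideal $IS_{\mathbf{x}}$ it generates: $g(IS_{\mathbf{x}}) = g(I^0 S_{\mathbf{x}}) = (gI^0)(gS_{\mathbf{x}}) = I^0 S_{\mathbf{x}} = IS_{\mathbf{x}}$, using that $g$ is an automorphism of $S_{\mathbf{x}}$.

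Finally I would contract back to $S$. Since $g$ is a monomial automorphism of $S_{\mathbf{x}}$ that preserves $S$ — more precisely, $g(S) \subseteq S$ because each $g\cdot x_i = \alpha x^u$ is a polynomial, and $g^{-1}$ likewise — we have $g(S) = S$, so $g$ restricts to an automorphism of $S$. Therefore
\[
g(I) = g\bigl((IS_{\mathbf{x}}) \cap S\bigr) = g(IS_{\mathbf{x}}) \cap g(S) = (IS_{\mathbf{x}}) \cap S = I,
\]
which is the desired invariance. The one genuine point needing care — and the step I expect to be the main obstacle to state cleanly — is that $g$ really is an automorphism of $S$ (not merely a $G$-action on the quotient $S/I$ or on $X^0$): one must check that the monomial maps $x_i \mapsto \alpha x^u$ are invertible as ring endomorphisms of $S$, which follows because $G$ is a group so $g^{-1}$ is also given by degree-preserving monomial maps, and the composite $g g^{-1}$ acts as the identity on the generators $x_i$. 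Once this is pinned down, the rest is the formal manipulation above. I would also note for completeness that the hypothesis is automatically satisfied when the $G$-action on $S$ is induced from a genuine $G$-action on $X$ compatible with the toric embedding, which is the situation for $S_6$ acting on $\Cox(\M)$.
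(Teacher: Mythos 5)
Your proposal is correct and follows essentially the same route as the paper: both arguments rest on the identity $I = (IS_{\mathbf{x}})\cap S$ with $IS_{\mathbf{x}}$ generated by its degree-zero part $I^0$, extend the monomial action to the localization $S_{\mathbf{x}}$, invoke the hypothesis that $G$ preserves $I^0$, and contract back to $S$ (the paper phrases this elementwise, writing $f=\sum f_i h_i/\mathbf{x}^{m_i}$ and applying $g$ termwise, while you phrase it as a formal manipulation with the automorphism $g$ of $S_{\mathbf{x}}$). Your care about $g$ restricting to an automorphism of $S$ corresponds to the paper's use of $g^{-1}$ to get the reverse inclusion, so no essential difference remains.
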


\begin{proof}
It suffices to show that $g \cdot f \in I$ for all $f \in I$.  That
shows that $g I \subset I$, and the reverse inclusion then follows
from applying $g^{-1}$.  For $f \in I^0S_{\mathbf{x}} \cap S$,
we have $f = \sum f_i h_i/\mathbf{x}^{m_i}$ for some $h_i \in S$
and $f_i \in I^0$.  Then $g \cdot f = \sum (g \cdot f_i) (g \cdot
h_i)/g \cdot \mathbf{x}^{m_i} = \sum (g\cdot f_i) (g \cdot h_i) /( g
\cdot \mathbf{x}^{m_i}) = \sum f'_i h'_i/\mathbf{x}^{n_i}$, where
$f'_i \in I_0$, by hypothesis, and $h'_i \in S$.  Here we use that the
action of $G$ on $S$ is via monomial maps to know that $g \cdot
\mathbf{x}^{m_i}$ is again a scalar multiple of a  monomial.  Since $G$ acts on $S$, we have
$g \cdot f \in S$, so this shows that $g \cdot f \in
I^0S_{\mathbf{x}} \cap S$.  As this equals $I$ by
Proposition~\ref{p:CoxRingRelations}, we have the desired result.
\end{proof}

Note that the induced map $\mathbb C[z_{ij}^{\pm 1}] \rightarrow
\mathbb C[x_I^{\pm 1}, y_{\pi}^{\pm 1} : I \in \mathcal I, \pi \in \Pi]$ given by the
matrix \eqref{e:matrixR} has the property that $\phi(z_{ij})$ is a
monomial in the variables $x_{ijk}$ times the formula given in
\eqref{eqtn:zij}.

We will apply Lemma~\ref{l:Gaction} to the $S_6$-action on $\M$.  Let
$S = \mathbb C[x_I, y_{\pi} : I \in \mathcal I, \pi \in \Pi]$.  We
have the isomorphism $(S_{\mathbf{x}})_{\mathbf{0}} \cong \mathbb
C[z_{24}^{\pm 1},\dots,z_{56}^{\pm 1}]$, where $S_6$ acts on $\mathbb
C[z_{24}^{\pm 1},\dots, z_{56}^{\pm 1}]$ as in
Table~\ref{t:S6action1}. 
Consider the $S_6$-action on $S$ defined as follows.  We set $\sigma
\cdot x_{ij} = x_{\sigma(i) \sigma(j)}$, with the convention that
$x_{ji} = -x_{ij}$.  This can be regarded as induced from the natural
action of $S_6$ on $\wedge^2 \mathbb Z^6$.  Similarly, the $S_6$-action on the variables $x_{ijk}$ is induced from the natural $S_6$-action on $\Sym^2(\wedge^3 \mathbb Z^6)$.  Here the $\Sym^2$ arises
from the fact that $x_{123}=x_{456}$.  Explicitly, this means that, for
example, $(12) x_{123} = -x_{123}$, $(12) x_{456} = -x_{456}$, and
$(12)x_{145}= x_{245}$.  It is then a straightforward computation to
check that $\phi(\sigma \cdot z_{ij}) = \sigma \cdot \phi(z_{ij})$ for
all $\sigma \in S_6$.  Since the action of $S_6$ on the variables
$z_{ij}$ is induced from the action on $x_{ij}$ using
\eqref{eqtn:zij}, it is only necessary for this to check that this
action is correct on the variables $x_{ijk}$.

The action of $S_6$ on the variables $y_{\pi}$ is then uniquely
determined by the requirement that $\sigma \cdot \phi(u_{\pi}) =
\phi(\sigma \cdot u_{\pi})$, as $\phi(u_{\pi}) = m y_{\pi}$ for a
monomial $m$ in the variables $x_I$.  Explicitly, this action is
induced from the natural $S_6$-action on $\wedge^3 (\Sym^2 \mathbb
Z^6)$.  For example, $(12) y_{(12)(34)(56)} = y_{(12)(34)(56)}$ since
$(12) ((\mathbf{e}_1+\mathbf{e}_2) \wedge (\mathbf{e}_3+\mathbf{e}_4)
\wedge (\mathbf{e}_5 + \mathbf{e}_6)) = (\mathbf{e}_1+\mathbf{e}_2)
\wedge (\mathbf{e}_3+\mathbf{e}_4) \wedge (\mathbf{e}_5 +
\mathbf{e}_6)$.  Note that a careful choice of signs in
Table~\ref{t:QpiEqns} was required for the action to work in this
fashion.

\section{The relations}
\label{s:maintheorem}

In this section we apply the method of
Proposition~\ref{p:CoxRingRelations} to compute the relations in
$\Cox(\overline{M}_{0,6})$.  By Castravet's theorem
(Theorem~\ref{t:Castravet}) this Cox ring is generated by sections
$x_I$ of the $25$ boundary divisors $\delta_{I}$, and by sections
$y_{\pi}$ of the $15$ Keel-Vermeire divisors $Q_{\pi}$.  The main
theorem of this section describes the ideal of relations between these
$40$ generators.

{
\renewcommand{\thetheorem}{\ref{t:maintheorem}}

\begin{theorem} 
Let $S = \mathbb C[x_I, y_{\pi} : I \in \mathcal I, \pi \in \Pi]$.  The Cox ring
of $\M$ is $S/I_{\M}$, where $I_{\M}$ is an ideal with 225 generators,
which come in five symmetry classes:

\begin{enumerate}
\item  \label{item:case1} $15$ of the form:
\begin{equation*}
x_{ij}x_{kl}x_{ijn}x_{kln}-x_{ik}x_{jl}x_{ikn}x_{jln}+x_{il}x_{jk}x_{iln}x_{jkn}
\end{equation*}
for $1\leq i<j<k<l\leq 6$, $m<n$ and $\{i,j,k,l,m,n\}=\{1,2,3,4,5,6\}$;

\item  \label{item:case2} $60$ of the form:
\begin{equation*}
x_{1ik}y_{(1m)(ij)(kl)}+x_{1j}x_{mk}x_{il}x_{1mj}x_{1mk}x_{1ij}+ x_{1l}x_{mi}x_{jk}x_{1mi}x_{1ml}x_{1kl},
\end{equation*}
for $\{i,j,k,l,m\}=\{2,3,4,5,6\}$;

\item \label{item:case3} $45$ of the form:
\begin{equation*}
x_{ij}y_{(ij)(kl)(mn)} +  x_{ik}x_{il}x_{jm}x_{jn}x_{ikl}^2 -  x_{im}x_{in}x_{jl}x_{jk}x_{imn}^2,
\end{equation*}
for $\{i,j,k,l,m,n\}=\{1,2,3,4,5,6\}$ and $i<j$;

\item \label{item:case4} $45$ of the form:
\begin{equation*}
  y_{(ij)(kl)(mn)}y_{(ij)(km)(ln)}-x_{il}x_{lj}x_{jm}x_{mi}x_{nk}^{2}x_{ijm}^{2}x_{ijl}^{2} +x_{in}x_{nj}x_{jk}x_{ki}x_{ml}^{2}x_{ijk}^{2}x_{ijn}^{2},
\end{equation*}
for $\{i,j,k,l,m,n\} = \{1,2,3,4,5,6\}$;

\item \label{item:case5} $60$ of the form:
\begin{multline*}
\qquad \quad y_{(1i)(jk)(lm)}y_{(1j)(il)(km)} - x_{1k}x_{1l}x_{ik}x_{im}x_{jl}x_{jm}x_{1ik}x_{1il}x_{1jk}x_{1jl}\\
+x_{1k}x_{1l}x_{ij}x_{im}x_{jm}x_{kl}x_{1ij}x_{1il}x_{1jk}x_{1kl} -x_{1k}x_{1m}x_{ij}x_{im}x_{jl}x_{kl}x_{1ij}x_{1im}x_{1jk}x_{1km}\\
-x_{1l}x_{1m}x_{ij}x_{ik} x_{jm} x_{kl} x_{1ij} x_{1il} x_{1jm} x_{1lm},
\end{multline*}
for $\{i,j,k,l,m\} = \{2,3,4,5,6\}$.
\end{enumerate}
Here we follow the convention that $x_{ij}\!=\!-x_{ji}$,
$x_{ijk}\!=\!-x_{jik}=\!-x_{ikj}$, $x_{ijk}\!=\!x_{lmn}$ for
\small$\{i,j,k,l,m,n \}= \{1,2,3,4,5,6\}$\normalsize, with $i<j<k$ and
$l<m<n$, and $y_{(ij)(kl)(mn)}=-y_{(kl)(ij)(mn)}=-y_{(ij)(mn)(kl)}$.
\end{theorem}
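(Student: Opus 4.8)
The plan is to apply Proposition~\ref{p:CoxRingRelations} with $X = \M$, using the toric embedding $\M \hookrightarrow X_\Sigma = \mathbb A^{40}\git_\alpha H$ from Example~\ref{e:M06embedding}, so that $\Cox(\M) = S/I_{\M}$ with $I_{\M} = \phi(I(Y))S_{\mathbf x} \cap S$, where $Y = M_{0,6}\setminus\bigcup_\pi Q_\pi$, the ideal $I(Y) = J_{KV}$ is given by Lemma~\ref{l:idealY}, and the monomial map $\phi$ is the one induced by the matrix $R$ of Lemma~\ref{l:Rdefinition}: concretely $\phi(z_{ij}) = \prod x_I^{R_{ij,I}}y_\pi^{R_{ij,\pi}}$ and $\phi(u_\pi) = \prod x_I^{R_{\pi,I}}y_{\pi'}^{R_{\pi,\pi'}}$. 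First I would record the generators of $J_{KV}$: the six affine linear relations of $I_6$ from Lemma~\ref{l:M06equations}, together with the $15$ binomials $u_\pi - f_\pi$ with the $f_\pi$ read off from Table~\ref{t:QpiEqns}. Applying $\phi$ turns each of these into a Laurent polynomial in the $x_I, y_\pi$; clearing denominators (multiplying by a suitable monomial $\mathbf x^m$) gives honest polynomials in $S$, and these are the first candidate generators of $I_{\M}$.

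The second step is saturation: $I_{\M} = \phi(I(Y))S_{\mathbf x}\cap S$ is the saturation of the ideal generated by the cleared-denominator images with respect to the product $\mathbf x$ of all $40$ variables. I would carry this out on a computer (Macaulay2 or Singular), obtaining a generating set, and then identify the five $S_6$-orbits. Here is where the $S_6$-action does the heavy lifting: by Lemma~\ref{l:Gaction}, combined with the explicit degree-preserving monomial $S_6$-action on $S$ described at the end of Section~\ref{s:coxequations} (on the $x_{ij}$ via $\wedge^2\mathbb Z^6$, on the $x_{ijk}$ via $\Sym^2\wedge^3\mathbb Z^6$, on the $y_\pi$ via $\wedge^3\Sym^2\mathbb Z^6$), the ideal $I_{\M}$ is $S_6$-stable, so it suffices to exhibit one representative in each orbit, check it lies in $I_{\M}$ (a direct substitution via $\phi$, reducing modulo $J_{KV}$, i.e. verifying the cleared polynomial maps into $I(Y)S_{\mathbf x}$), count the orbit sizes under the stabilizers ($15 = |S_6|/|S_4\times S_2\times \text{sign}|$-type counts, giving $15,60,45,45,60$, total $225$), and then show these $225$ elements \emph{generate}. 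For the generation step I would verify that the degrees (classes in $\Pic(\M)$) of the listed relations match, and either (a) check directly in the computer algebra system that the ideal they generate is already saturated and equals the computed $I_{\M}$, or (b) give a Gröbner-basis / dimension argument: the quotient $S/(\text{225 relations})$ has the right Hilbert function / the right dimension $24+16 = 40$... in fact $\dim \Cox(\M) = \dim \M + \rk\Pic(\M) = 3 + 16 = 19$? — one checks $\dim V(I_{\M}) = 19$ and that the candidate ideal is prime of that dimension, hence equals $I_{\M}$ since it is contained in the prime $I_{\M}$ and has the same dimension.

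The main obstacle is the generation/saturation step: \emph{a priori} the naive images of the $J_{KV}$ generators only generate $I_{\M}$ after saturating at $\mathbf x$, and proving that exactly these $225$ explicit polynomials suffice (rather than some larger Gröbner basis) requires either a trusted symbolic computation in $\mathbb C[x_I,y_\pi]$ over $40$ variables — which is near the edge of feasibility and is why the $S_6$-symmetry is exploited to reduce to checking five orbit representatives — or a clever by-hand argument. I expect the write-up to proceed by: (i) listing the candidate relations and checking each representative maps into $I(Y)S_{\mathbf x}$ under $\phi$ (so the ideal $J$ they generate satisfies $J \subseteq I_{\M}$); (ii) arguing $J$ is $S_6$-invariant of the stated form with $225$ generators; (iii) showing $V(J)$ is irreducible of dimension $19$ with $J$ prime, e.g.\ by exhibiting the map $\mathbb A^{40}\git H \supset \M$ scheme-theoretically or by a primary-decomposition / radical computation on the symmetry-reduced data; and (iv) concluding $J = I_{\M}$ since a prime ideal contained in $I_{\M}$ with $\dim V(J) = \dim V(I_{\M}) = 19$ must be equal to it (using that $I_{\M}$ is prime by \cite[Proposition 1.4.1.5]{CoxRingBook}). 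The delicate points along the way are the sign conventions — getting the three- and four-term relations to be genuinely $S_6$-equivariant forces the careful choices of signs in Table~\ref{t:QpiEqns} and in the $y_\pi$-action — and bookkeeping the denominators so that the cleared polynomials have the minimal form stated.
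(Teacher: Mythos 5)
Your proposal follows the paper's proof essentially verbatim: the same toric embedding from Example~\ref{e:M06embedding}, Proposition~\ref{p:CoxRingRelations} with the monomial map $\phi$ built from the matrix $R$ of Lemma~\ref{l:Rdefinition}, Lemma~\ref{l:idealY} for $J_{KV}$, the $S_6$-action via Lemma~\ref{l:Gaction} to reduce to five orbit representatives, and a computer-assisted saturation check to prove the $225$ relations generate. The only difference is cosmetic: instead of computing the full saturation of the images of the $J_{KV}$ generators, the paper notes that the class (1) and (2) relations are monomial multiples of $\phi$ applied to representatives of the two $S_6$-orbits of generators of $J_{KV}$, so the candidate ideal already agrees with $\phi(J_{KV})S_{\mathbf{x}\mathbf{y}}$ after localization, leaving only the {\tt Macaulay2} verification that it is saturated with respect to the product of the variables (your alternative primality-plus-dimension argument would also work but is not the route taken).
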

}

\begin{proof}
By  Example~\ref{e:M06embedding} $\M$ has an embedding
$i:\M \hookrightarrow X_{\Sigma}$ into a $24$-dimensional toric 
variety in such a way that the intersection of $\M$ with the 
torus $T^{24}$ of $X_{\Sigma}$ is $Y=M_{0,6} \setminus \cup_{\pi \in \Pi} Q_{\pi}$.

By Lemma~\ref{l:idealY} we have
\[
\C[Y]=\C[z_{ij}^{\pm 1},u_\pi^{\pm 1}: 2\leq i<j \leq 6, ij \neq 23, \pi \in \
\Pi]/J_{KV},
\]
where $J_{KV}=I_6+\left< u_\pi-f_\pi: \pi \in \Pi \right> \subset
\C[T^{24}]$.  Here $I_6$ and $f_{\pi}$ are as defined in
Lemma~\ref{l:M06equations} and Table~\ref{t:QpiEqns}.

We denote by $\mathbf{x}\mathbf{y}$ the product $\prod_{I \in \mathcal I}x_I
\prod_{\pi \in \Pi} y_{\pi}$.  Let $\phi$ be given by
\begin{equation*}
\label{e:phi}
\begin{split}
\phi:\C[z_{ij}^{\pm 1},u_\pi^{\pm 1}]  \rightarrow \big(S_{\mathbf {xy}}\big)_0\\
 z_{ij}  \mapsto \prod_{I}x_I^{R_{{ij},I}},\quad
 \\
 u_\pi  \mapsto y_\pi\prod_{I}x_I^{R_{\pi,I}},
\end{split}
\end{equation*}
where $R$ is the matrix defined in (\ref{e:matrixR}) and $R_{{ij},I} $
(respectively $R_{{ij},\pi}$) denotes the entry with row labelled
${ij}$ and column labelled $I$ (respectively $\pi$).

Proposition~\ref{p:CoxRingRelations} then implies that the ideal
$I_{\M}$ equals
$$I_{\M} = (\phi(J_{KV})S_{\mathbf{x}\mathbf{y}})  \cap S.$$

In order to compute the ideal of relations we thus only need to
compute $\phi(I(Y))S_{\mathbf{xy}} \cap S$. We will exhibit the
different symmetry types as elements of $\phi(I(Y))S_{\mathbf{xy}}
\cap S$ and then argue that the ideal generated by these equations is
saturated with respect to the product of the variables, so equals
$I_{\M}$.

\begin{description}
\item[(1)] 

For $j,k,l$ with $[6]=\{1,2,3,j,k,l\}$ we have
\[
\phi_R(z_{2j})=\frac{x_{2j}x_{13}x_{13k}x_{13l}}{x_{1j}x_{23}x_{1jk}x_{1jl}},
\]
\[
\phi_R(z_{3j})=\frac{x_{3j}x_{12}x_{12k}x_{12l}}{x_{1j}x_{23}x_{1jk}x_{1jl}}
\]
and
\[
\phi_R(z_{jk})=\frac{x_{jk}x_{12}x_{13}x_{123}x_{12l}x_{13l}}{x_{1j}x_{1k}x_{23}x_{1jk}x_{1jl}x_{1kl}}.
\]
Then, 
\begin{align*}
\phi_R(z_{34}+1-z_{24})&=\frac{x_{12}x_{34}x_{125}x_{126}}{x_{14}x_{23}x_{145}x_{146}}-\frac{x_{13}x_{24}x_{135}x_{136}}{x_{14}x_{23}x_{145}x_{146}}+1\\
&=\frac{x_{12}x_{34}x_{125}x_{126}-x_{13}x_{24}x_{135}x_{136}+x_{14}x_{23}x_{145}x_{146}}{x_{14}x_{23}x_{145}x_{146}}.
\end{align*}
 Thus the polynomial
\begin{equation} \label{eqtn:case1}
x_{12}x_{34}x_{125}x_{126}-x_{13}x_{24}x_{135}x_{136}+x_{14}x_{23}x_{145}x_{146}
\end{equation}
times a Laurent monomial in the variables $x_I$ is in $\phi_R(I(Y))$
and therefore it itself is in
$\big(\phi_R(I(Y))S_{\mathbf{xy}}\big)\cap S$.  This is the case $i=1,
j=2, k=3, l=4, m=5, n=6$ of case \ref{item:case1}.  The $S_6$-orbit of this
polynomial has size $15$; this can be checked directly by hand, or
using the accompanying {\tt Macaulay2} package available from the
authors' webpages \cite{M06M2package}.

\item[(2)]
Let $\pi=(12)(34)(56)$. Then $u_\pi -f_\pi=u_\pi-z_{24}+z_{25} z_{26}$ and 
\begin{align*}
\qquad \quad h&=u_\pi+ z_{45} +z_{25} z_{36}\\
                &=(u_{\pi}-f_{\pi}) +z_{25}(z_{36}+1-z_{26})+(z_{45}-z_{25}+z_{24})\quad \qquad
\end{align*}
is an element of $I(Y)$. Therefore
\begin{multline*}
\qquad \quad\phi_R(h)=\Big(\frac{x_{12}x_{13}x_{136}}{x_{14}x_{15}x_{16}x_{23}^{2}x_{145}x_{146}x_{156}^{2}}\Big)\cdot\\
(x_{135}y_{\pi}+x_{14}x_{25}x_{36}x_{124}x_{125}x_{134}+x_{16}x_{23}x_{45}x_{123}x_{126}x_{156})\qquad
\end{multline*}
and hence
\begin{equation} \label{eqtn:case2}
x_{135}y_{(12)(34)(56)}+x_{14}x_{25}x_{36}x_{124}x_{125}x_{134}+x_{16}x_{23}x_{45}x_{123}x_{126}x_{156}
\end{equation}
is an element of $\big(\phi_R(I(Y))S_{\mathbf{xy}}\big)\cap S$.  This
the case $i=3, j=4, k=5, l=6, m=2$ of case~\ref{item:case2}.  The
$S_6$-orbit has size $60$; again this can be checked by hand, or using the
accompanying package.

\item[(3)] 
For $\pi=(12)(34)(56)$, we have that
\begin{multline*}
\qquad \quad\phi_R(u_\pi-f_\pi)=\Big(\frac{x_{13}x_{135}x_{136}}{x_{14}x_{15}x_{16}x_{23}^{2}x_{145}x_{146}x_{156}^{2}}\Big)\cdot\\
(x_{12}y_{\pi}+x_{13}x_{14}x_{25}x_{26}x_{134}^{2}-x_{15}x_{16}x_{23}x_{24}x_{156}^{2})\qquad \quad
\end{multline*}
and therefore
\begin{equation} \label{eqtn:case3}
x_{12}y_{\pi}+x_{13}x_{14}x_{25}x_{26}x_{134}^{2}-x_{15}x_{16}x_{23}x_{24}x_{156}^{2}
\end{equation}
is an element of $\big(\phi_R(I(Y))S_{\mathbf{xy}}\big)\cap S$.  This
is the case $i=1, j=2, k=3, l=4, m=5, n=6$ of case~\ref{item:case3}.
The $S_6$-orbit has size $45$.

\item[(4)]
Let $\pi_1=(12)(34)(56)$ and $\pi_2=(12)(35)(46)$. 
Let
\[
h=y_{\pi_1}y_{\pi_2}-x_{14}x_{15}x_{24}x_{25}x_{36}^{2}x_{124}^{2}x_{125}^{2}+x_{13}x_{16}x_{23}x_{26}x_{45}^{2}x_{123}^{2}x_{126}^{2}.
\]

Let $F_1$ be the polynomial \eqref{eqtn:case1},
let $F_2$ be the
polynomial~\eqref{eqtn:case2}, and let $F_3$ be the polynomial of \eqref{eqtn:case3}.  
Set $g = F_3  ((45) \cdot F_3)$.  

 We claim that 
$x_{12}^2h - g$ lies in the ideal 
\[\langle  (35) \cdot F_1, (46) \cdot F_1, (36)\cdot F_2, (45) \cdot F_2, (34)(56) \cdot F_2, F_2 \rangle \subseteq
\big(\phi_R(I(Y))S_{\mathbf{xy}}\big)\cap S.
\]
This can in principle be checked by hand, but is also confirmed in the accompanying {\tt Macaulay2} package.   Since $F_3 \in 
\big(\phi_R(I(Y))S_{\mathbf{xy}}\big)\cap S$, we also have $g \in 
\big(\phi_R(I(Y))S_{\mathbf{xy}}\big)\cap S$.  Thus 
$x_{12}^2h$ is also in $\phi_R(Y)S_{\mathbf{xy}}$ and so $h \in
 I_{\M}$.  This is the case $i=1, j=2, k=3,  l=4,  m=5, n=6$ of case~\ref{item:case4}.  The $S_6$-orbit has size $60$.

\item[(5)]
 Finally, let 
$\pi_1=(12)(34)(56)$ and $\pi_2=(13)(25)(46)$.  Set
\begin{multline*}
\qquad \quad h=y_{\pi_1}y_{\pi_2}-x_{14}x_{15}x_{24}x_{26}x_{35}x_{36}x_{124}x_{125}x_{134}x_{135}\\
+x_{14}x_{15}x_{23}x_{26}x_{36}x_{45}x_{123}x_{125}x_{134}x_{145}\\\qquad-x_{14}x_{16}x_{23}x_{26}x_{35}x_{45}x_{123}x_{126}x_{134}x_{146}\\
-x_{15}x_{16}x_{23}x_{24} x_{36} x_{45} x_{123} x_{125} x_{136} x_{156}.
\end{multline*}
Set $g = F_3((23)(45) \cdot F_3) \in 
\phi_R(I(Y))S_{\mathbf{xy}}$.
We claim that $x_{12}x_{13}h-g$ lies in the ideal
\begin{multline*}
\qquad \langle (25) \cdot F_1, (15) \cdot F_1, (25)(46) \cdot F_1, (35) \cdot F_1,  (46) \cdot F_1, (36) \cdot F_1, \\  (236) \cdot F_2, (12) \cdot F_2, (154) \cdot F_3, (13)(24) \cdot F_3 \rangle.\\
\end{multline*}
Thus $x_{12}x_{13}h \in \phi_R(I(Y))S_{\mathbf{xy}}$ and therefore
$h\in I_{\M}$.  This is the case $i=2, j=3, k=4, l=5, m=6$ of
case~\eqref{item:case5}.  The $S_6$-orbit has size $60$.
\end{description}
 This shows that one representative of each $S_6$-orbit from the
statement of the theorem lies in
$(\phi(J_{KV})S_{\mathbf{x}\mathbf{y}}) \cap S$.
Lemma~\ref{l:Gaction} and the explicit action of $S_6$ given at the
end of Section~\ref{s:coxequations} then imply that all $225$
polynomials given in the theorem statement are in $I_{\M}=
(\phi(J_{KV})S_{\mathbf{x}\mathbf{y}}) \cap S$.  To show that these
polynomials in fact generate $I_{\M}$, we first note that the ideal
generated by these $225$ generators in $S_{\mathbf{x}\mathbf{y}}$
equals $\phi(J_{KV})S_{\mathbf{x}\mathbf{y}}$.  This is the case
because the polynomial of case~\ref{item:case1} is a monomial multiple of the image under
$\phi$ of a representative of one $S_6$-orbit of generators of
$J_{KV}$: $z_{34}+1-z_{24}$. The polynomial of case~\ref{item:case2} is a monomial
multiple of the image under $\phi$ of the other $S_6$-orbit of
generators: $y_{\pi}-f_{\pi}$.  It thus suffices to check that the
ideal generated by the given $225$ generators is saturated with
respect to the product of the variables.  This can be done,
for example, using the accompanying {\tt Macaulay2}
package.
% \cite{M06M2package}.
\end{proof}

\section{Small birational models}
\label{s:smallbirationalmodels}

A positive consequence of the fact that $\M$ is a Mori dream space
is that the movable cone of $\M$ decomposes into finitely many Mori
chambers, each of which is the pullback of the nef cone of a
birational model of $\M$, and each small $\mathbb Q$-factorial
modification of $\M$ occurs in this list.  These chambers are
precisely the GIT chambers of the GIT description
\begin{equation} \label{eqtn:bigGIT}
\M = V(I_{\M}) \git_{\alpha} H
\end{equation}
given in Example~\ref{e:M06embedding}.

The main result of this section is Proposition~\ref{p:Esuffices},
which gives a simpler GIT problem whose chambers also describe the
chamber decomposition of the movable cone.  The new GIT problem is the
quotient of a variety in $\mathbb A^{25}$ instead of $\mathbb A^{40}$,
which makes the computations significantly simpler.  An updated
version of \cite{BKR} (Remark 6.7) takes this decomposition into
account.

\begin{lemma} \label{l:Qpihyperplane}
Fix $\pi= (ij)(kl)(mn)$.  The hyperplane $\Delta_{\pi}$ in $\Pic(\M)$ spanned by
$\{ \delta_{ab} : \{a,b\} \not \in \{ \{i,j\}, \{k,l\},
\{m,n\} \} \cup \{ \delta_{abc} : i,j \in \{a,b,c\} \text{ or } k, l
\in \{a ,b ,c \} \text{ or } m,n \in \{a,b,c \} \}$ separates
$Q_{\pi}$ from all other boundary divisors and $Q_{\pi'}$. 
\end{lemma}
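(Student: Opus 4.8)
I want to show that the hyperplane $\Delta_\pi \subset \Pic(\M)_\mathbb{R}$ has $Q_\pi$ strictly on one side and all the other $23$ boundary divisors together with the $14$ remaining Keel-Vermeire divisors $Q_{\pi'}$ strictly on the other side. Since the $S_6$-action permutes the divisor classes and acts on $\Pic(\M)$ linearly, it suffices to treat one representative, say $\pi = (12)(34)(56)$. So the plan is: first, fix the explicit basis $\{\mathbf{e}_i, \mathbf{e}_{1ij}\}$ for $\Pic(\M) \cong \mathbb{Z}^{16}$ introduced in Section~\ref{s:coxequations}; second, write down explicitly a linear functional $\ell_\pi$ on $\Pic(\M)_\mathbb{R}$ whose kernel is $\Delta_\pi$; third, evaluate $\ell_\pi$ on all $40$ divisor classes $\delta_I, Q_{\pi'}$ and check the sign pattern.

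\textbf{Constructing the functional.} For $\pi = (12)(34)(56)$ the spanning set of $\Delta_\pi$ consists of the boundary classes $\delta_{ab}$ with $\{a,b\} \notin \{\{1,2\},\{3,4\},\{5,6\}\}$ (there are $12$ such two-element classes) together with the $\delta_{abc}$ such that one of $\{1,2\}, \{3,4\}, \{5,6\}$ is contained in $\{a,b,c\}$ (using the identification $\delta_{abc} = \delta_{[6]\setminus\{a,b,c\}}$, counting these $10$ classes). That is $22$ classes, and since $\Delta_\pi$ is a hyperplane I must check these span a $15$-dimensional space; then $\ell_\pi$ is determined up to scalar by vanishing on them. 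Rather than invert a $16\times 22$ matrix by hand, the cleaner route is to \emph{guess} $\ell_\pi$ from the symmetry. The stabilizer of $\pi$ in $S_6$ is the wreath-type subgroup of order $48$ generated by $(12)$, $(34)$, $(56)$ and $(13)(24)$, $(35)(46)$; the functional $\ell_\pi$ must be invariant under this stabilizer up to sign. A natural invariant candidate, in terms of the classes themselves, is something proportional to $Q_\pi$ minus a symmetric combination of the ``special'' boundary divisors $\delta_{12}, \delta_{34}, \delta_{56}$ and the $\delta_{abc}$ not in the spanning set; one computes the coefficients so that it pairs to zero with every element of the spanning set. In practice I would compute $\ell_\pi$ as (a scalar multiple of) the unique (up to sign) primitive integer vector in the annihilator of $\spann(\Delta_\pi)$ — this is a routine linear-algebra computation over $\mathbb{Z}$, which I would perform once and record.

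\textbf{Checking the sign pattern.} With $\ell_\pi$ in hand, evaluate $\langle \ell_\pi, \delta_I\rangle$ for the $25$ boundary classes. By construction it vanishes on the $22$ in the spanning set (a sanity check), so the content is: $\langle \ell_\pi, \delta_{12}\rangle$, $\langle\ell_\pi,\delta_{34}\rangle$, $\langle\ell_\pi,\delta_{56}\rangle$, and $\langle\ell_\pi, \delta_{abc}\rangle$ for the three $\delta_{abc}$ (up to complementation) with $\{a,b,c\}$ a transversal of the three pairs — these are $\delta_{135}, \delta_{136}, \delta_{145}$ and their complements — should all have the \emph{same} sign, and $\langle\ell_\pi, Q_\pi\rangle$ should have the \emph{opposite} sign. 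Then do the same for the $15$ Keel-Vermeire classes $Q_{\pi'}$: one checks $\langle \ell_\pi, Q_{\pi'}\rangle$ has the same sign as the nonzero $\langle\ell_\pi,\delta_I\rangle$ for all $\pi' \neq \pi$. Because the $S_6$-action already reduces the $15$ $Q_{\pi'}$ to a small number of stabilizer-orbits, only a handful of distinct inner products actually need to be computed; the rest follow by invariance. This is a finite, mechanical verification with the matrices $A_{\mathrm{bnd}}$ and $A_{KV}$ already in the paper.

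\textbf{Main obstacle.} The real work — and the only place an error could hide — is pinning down $\ell_\pi$ correctly and then confirming that \emph{every} one of the $40$ classes lands strictly on the claimed side, with no class accidentally on the hyperplane. The symmetry cuts the bookkeeping down dramatically, but one must be careful with the sign conventions $x_{ijk} = x_{lmn}$, $\delta_{abc} = \delta_{[6]\setminus\{a,b,c\}}$, and the way $\delta_{ij}$ is expanded in the $\mathbf{e}$-basis; a sign slip there propagates. I would therefore state $\ell_\pi$ explicitly in coordinates, tabulate the (few distinct) values $\langle\ell_\pi, D\rangle$ for $D$ ranging over stabilizer-orbit representatives of all $40$ divisors, and remark that the full claim follows by applying $S_6$ and by the invariance of $\ell_\pi$ under $\Stab(\pi)$; the computation can also be confirmed with the accompanying \texttt{Macaulay2} package.
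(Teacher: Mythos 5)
Your strategy coincides with the paper's: exhibit a linear functional annihilating the stated spanning set, evaluate it on all $40$ classes to get the sign pattern, and check that the span is $15$-dimensional. The paper does not solve a linear system to find it; it simply writes the functional down in the symmetric basis: for $\pi=(ij)(kl)(mn)$ one takes $\rho=\sum_{a=1}^{6}\mathbf{e}_a^{*}+\mathbf{e}_{ikm}^{*}+\mathbf{e}_{ikn}^{*}+\mathbf{e}_{ilm}^{*}+\mathbf{e}_{iln}^{*}$, reads off from $A_{bnd}$ and $A_{KV}$ that $\rho(Q_\pi)=-2$, $\rho(\delta_{ij})=\rho(\delta_{kl})=\rho(\delta_{mn})=2$, $\rho=2$ on the four ``transversal'' triple classes, $\rho=0$ on all remaining boundary classes, and $\rho(Q_{\pi'})\in\{2,3\}$ for $\pi'\neq\pi$, and then verifies that the spanning set has rank $15$ by a $16\times 18$ rank computation.

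The gap is that the one substantive ingredient, the explicit functional and its values, is only promised (``I would perform once and record''), and the parts of the bookkeeping you did make explicit are wrong, which shows the verification was not actually carried out. After the identification $\delta_{abc}=\delta_{[6]\setminus\{a,b,c\}}$, only six triple classes contain one of the pairs $\{1,2\},\{3,4\},\{5,6\}$ (for $\pi=(12)(34)(56)$ these are $\delta_{123},\delta_{124},\delta_{125},\delta_{126},\delta_{134},\delta_{156}$), so the spanning set has $12+6=18$ elements, not $22$; and the triple classes off the hyperplane are the \emph{four} transversals $\delta_{135},\delta_{136},\delta_{145},\delta_{146}$, not three --- you omit $\delta_{146}$. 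Your counts are in fact internally inconsistent: $22$ classes on the hyperplane plus the $3+3$ you place off it would account for $28>25$ boundary classes, and if one literally took $22$ classes (i.e.\ accidentally included transversals) their span would be all of $\Pic(\M)_{\mathbb R}$ and the sought functional would be zero. With the correct $18$-element set your plan goes through exactly as in the paper, but as written the proposal is a correct plan rather than a proof, and precisely the sign-convention pitfalls you flag are where it has already slipped.
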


\begin{proof}
Consider the element $\rho=\sum_{i=1}^6 \mathbf{e}_{i}^* +
\mathbf{e}^*_{ikm} + \mathbf{e}^*_{ikn} + \mathbf{e}^*_{ilm} +
\mathbf{e}^*_{iln} \in \Pic(\M)^*$, where the $*$ denotes the dual
basis element in the symmetric basis for $\Pic(\M)$.  By examining the
matrices $A_{bnd}$ and $A_{KV}$ given in \eqref{e:matrixGb} and
\eqref{e:matrixKV}, we obtain $\rho(Q_{\pi}) = -2$,
$\rho(\delta_{ij})=\rho(\delta_{kl})=\rho(\delta_{mn}) = 2$,
$\rho(\delta_{ikm}) = \rho(\delta_{ikn}) = \rho(\delta_{ilm}) =
\rho(\delta_{iln}) = 2$, and all other $\rho(\delta_{I})=0$.  In
addition $\rho(Q_{\pi'}) \in \{2,3\}$ for all $\pi' \in \Pi$ with $\pi'
\neq \pi$.  Let $H$ be the hyperplane $\{ x \in \Pic(\M) \otimes
\mathbb R: \rho(x)=0 \}$.  Then $H$ contains the generators of
$\Delta_{\pi}$, so it remains to check that $\spann(\Delta_{\pi})$ is
fifteen dimensional, which can be done by computing the rank of the
corresponding $16 \times 18$ matrix.
\end{proof}

Let $J = I_{\M} \cap \mathbb C [x_I : I \in \mathcal I]$.  This is
generated by the $15$ polynomials of the first symmetry class of
Theorem~\ref{t:maintheorem}, and by the $S_6$-orbit, which also has
size $15$, of the polynomial $x_{12}x_{26}x_{34}x_{35}x_{45}x_{126}^2
- x_{13}x_{24}x_{25}x_{36}x_{45}x_{136}^2 +
x_{14}x_{23}x_{25}x_{35}x_{46}x_{146}^2 -
x_{15}x_{23}x_{24}x_{34}x_{56}x_{156}^2$.  The Picard torus $H$ acts
on the affine space $\mathbb A^{25}$ with coordinates $\{x_I : I \in
\mathcal I\}$.  Since $J$ is homogeneous with respect to the
induced grading, $H$ also acts on $V(J) \subset \mathbb A^{25}$.  By
\cite{GibneyMaclaganEquations}*{Theorem 7.1} we have
\begin{equation} \label{eqtn:newGIT} \M = V(J) \git_{\alpha} H 
\end{equation}  where $\alpha$ is any character of $H$
corresponding to an ample divisor on $\M$.
Let $E = \pos(\delta_I : I \in \mathcal I) \subseteq \Pic(\M)_{\mathbb
  R}$ be the cone generated by the boundary divisors.

\begin{proposition} \label{p:Esuffices}
The movable cone $\Mov(\M)$ of $\M$ is contained in the cone $E$.  The
GIT chambers of the GIT problem \eqref{eqtn:newGIT} that intersect
$\Mov(\M)$ contain the Mori chambers for $\M$,  and the corresponding
birational models equal the GIT quotients
$$ V(J) \git_{\beta} H$$
as $\beta$ varies over the relative interior of the chambers.
\end{proposition}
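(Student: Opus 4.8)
The plan is to prove the three assertions of Proposition~\ref{p:Esuffices} in order: first that $\Mov(\M) \subseteq E$, then that every Mori chamber of $\M$ lies in a GIT chamber of \eqref{eqtn:newGIT} meeting $\Mov(\M)$, and finally that the birational models are recovered as the quotients $V(J) \git_\beta H$.

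\textbf{Step 1: $\Mov(\M) \subseteq E$.} Recall that $\Eff(\M)$ is generated by the $25$ boundary classes $\delta_I$ together with the $15$ Keel--Vermeire classes $Q_\pi$, and that $E = \pos(\delta_I : I \in \mathcal I)$. A movable class cannot have a fixed component, so it suffices to show that each $Q_\pi$ lies on the boundary of $\Eff(\M)$ and is cut out there by a supporting hyperplane that places all $40$ generators on the closed side, with $Q_\pi$ the unique generator on the hyperplane itself; then any movable class, written in the generators, must have zero coefficient on every $Q_\pi$, hence lies in $E$. This is exactly what Lemma~\ref{l:Qpihyperplane} provides: the functional $\rho = \sum_{i} \mathbf e_i^* + \mathbf e_{ikm}^* + \mathbf e_{ikn}^* + \mathbf e_{ilm}^* + \mathbf e_{iln}^*$ satisfies $\rho(Q_\pi) = -2 < 0$ while $\rho \geq 0$ on all other $39$ generators of $\Eff(\M)$ (the boundary values are $0$ or $2$, and $\rho(Q_{\pi'}) \in \{2,3\}$ for $\pi' \neq \pi$). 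Hence $-\rho$ is a supporting functional exhibiting $Q_\pi$ as an extremal ray not in $E$; since this holds for every $\pi \in \Pi$, we conclude $\Mov(\M) \subseteq E$.

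\textbf{Step 2: chambers of \eqref{eqtn:newGIT} refine the Mori chambers inside $\Mov(\M)$.} Here the key input is the identification \eqref{eqtn:newGIT} itself, $\M = V(J) \git_\alpha H$, coming from \cite{GibneyMaclaganEquations}*{Theorem 7.1}, where $J = I_{\M} \cap \mathbb C[x_I]$. Because $V(J) \subset \mathbb A^{25}$ carries an $H$-action with the same grading data as in the big GIT problem \eqref{eqtn:bigGIT}, the variation of GIT across $\Pic(\M)_{\mathbb R}$ gives a chamber decomposition, and for $\beta$ in the interior of a chamber meeting the ample cone we recover $\M$ (with a possibly different polarization). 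The standard VGIT dictionary (cf. \cite[Theorem 2.4]{HuKeelMDS}) identifies the GIT chambers of the \emph{full} Cox construction \eqref{eqtn:bigGIT} with the Mori chambers of $\M$; the smaller problem \eqref{eqtn:newGIT} has a coarser wall-and-chamber structure on the subcone $E$, so each Mori chamber of $\M$ lying in $\Mov(\M) \subseteq E$ is contained in a single chamber of \eqref{eqtn:newGIT}. The point to verify is that on the open set $\Mov(\M)$ the two decompositions actually agree (not merely that one refines the other): this follows because the only generators of $\Cox(\M)$ that are dropped in passing from $\mathbb A^{40}$ to $\mathbb A^{25}$ are the $y_\pi$, whose divisor classes $Q_\pi$ lie strictly outside $E \supseteq \Mov(\M)$ by Step 1, so the sections $y_\pi$ are nowhere vanishing — hence units — on the relevant semistable loci, and deleting them does not change the quotient or its variation over $\Mov(\M)$.

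\textbf{Step 3: the birational models.} Finally, for $\beta$ in the relative interior of a chamber of \eqref{eqtn:newGIT} meeting $\Mov(\M)$, the GIT quotient $V(J) \git_\beta H$ is a small $\mathbb Q$-factorial modification of $\M$, and as $\beta$ ranges over all such interiors these quotients realize all the Mori chambers of $\M$ inside $\Mov(\M)$, hence all small $\mathbb Q$-factorial modifications of $\M$; this is immediate from combining Steps 1 and 2 with the general VGIT statement that the quotient for $\beta$ is the birational model whose nef cone is the closure of that chamber, pulled back to $\Pic(\M)$. I expect the main obstacle to be Step 2 — specifically, being careful that passing from the $40$-variable presentation to the $25$-variable one of \eqref{eqtn:newGIT} does not coarsen the chamber structure \emph{within} $\Mov(\M)$; the resolution is precisely the observation that the discarded sections $y_\pi$ correspond to classes outside $E$ and so are invertible on the semistable loci over $\Mov(\M)$, which is why one may work with $V(J) \subset \mathbb A^{25}$ throughout.
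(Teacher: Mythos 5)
There are two genuine gaps, one in your Step 1 and one in your Step 2, and in both cases the missing content is exactly the argument the paper actually runs. In Step 1, Lemma~\ref{l:Qpihyperplane} gives, for each $\pi$, a functional $\rho_\pi$ that is negative on $Q_\pi$ and nonnegative on the other $39$ generators; from this (together with the description $\Mov(\M)=\bigcap_{\rho}\pos(D_{\rho'}:\rho'\neq\rho)$) you can conclude $\Mov(\M)\subseteq\bigcap_\pi\{\rho_\pi\geq 0\}$, but your further inference that ``any movable class, written in the generators, must have zero coefficient on every $Q_\pi$, hence lies in $E$'' does not follow: conical representations are not unique, ``no fixed component'' does not control coefficients in such a representation, and $\Eff(\M)\cap\bigcap_\pi\{\rho_\pi\geq0\}$ is not formally equal to $E$ (for instance $Q_{\pi_1}+Q_{\pi_2}$ satisfies all the inequalities, since $\rho_{\pi_1}(Q_{\pi_2})\geq 2$). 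In the paper the containment $\Mov(\M)\subseteq E$ is not proved by convex geometry at all; it falls out of the graded-ring statement below, because $(S/I_{\M})_{\ell\mathbf v}\neq 0$ for $\mathbf v\in\Mov(\M)$ and surjectivity of the restriction forces $(S'/J)_{\ell\mathbf v}\neq 0$, i.e.\ $\ell\mathbf v\in E$.

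In Step 2 your key justification --- that the sections $y_\pi$ are ``nowhere vanishing, hence units, on the relevant semistable loci'' for $\beta$ over $\Mov(\M)$ --- is false: the zero locus of $y_\pi$ in $V(I_{\M})$ maps onto (the strict transform of) the divisor $Q_\pi$, which is a nonempty divisor in $\M$ and in each of its small modifications, so $y_\pi$ certainly vanishes on the semistable locus. What is actually needed, and what the paper proves, is the statement that for every $\mathbf v\in\Mov(\M)$ the inclusion $\bigoplus_{\ell\geq0}(S'/J)_{\ell\mathbf v}\hookrightarrow\bigoplus_{\ell\geq0}(S/I_{\M})_{\ell\mathbf v}$ is surjective, hence an isomorphism of the section rings whose Proj's are the models. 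This is done by an explicit rewriting procedure with the generators of $I_{\M}$ from Theorem~\ref{t:maintheorem}: relations of types (4) and (5) are used to eliminate monomials divisible by two distinct $y_\pi$'s, and then Lemma~\ref{l:Qpihyperplane} enters in its real role --- since $\rho_\pi(\ell\mathbf v)\geq0$ while $\deg y_\pi=Q_\pi$ has $\rho_\pi(Q_\pi)<0$, any remaining monomial containing $y_\pi$ must also contain some $x_I$ with $\delta_I\notin\Delta_\pi$, so a relation of type (2) or (3), whose leading term is precisely such a product $x_Iy_\pi$, can be subtracted to remove $y_\pi$ altogether. Without this (or some substitute) your Steps 2 and 3 reduce to the unproved assertion that discarding the $y_\pi$ changes neither the quotients nor the wall-and-chamber structure over $\Mov(\M)$, which is the whole content of the proposition.
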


\begin{proof}
The movable cone for $\M$ is the movable cone of the toric variety
$X_{\Sigma}$.  For $\rho \in \mathcal I \cup \Pi$ we
write $D_{\rho}$ for $\delta_I$ if $\rho=I \in \mathcal I$ and for
$Q_{\pi}$ if $\rho = \pi \in \Pi$.  The movable cone is then 
$$\Mov(\M) = \cap_{\rho \in \mathcal I \cup \Pi } \pos(D_{\rho'} : \rho' \in
\mathcal I \cup \Pi, \rho' \neq \rho).$$

Write $S$ for the polynomial ring $\mathbb C[ x_I, y_{\pi} : I \in
  \mathcal I, \pi \in \Pi]$, and $S'$ for the polynomial ring $\mathbb
C[ x_I : I \in \mathcal I]$.  The inclusion $S'/J \rightarrow
S/I_{\M}$ induces an inclusion $$\phi_{\mathbf{v}} \colon \oplus_{\ell
  \geq 0} (S'/J)_{\ell \mathbf{v}} \rightarrow \oplus_{\ell \geq 0}
(S/I_{\M})_{\ell \mathbf{v}}$$ for any $\mathbf{v} \in \Mov(\M)$.  It suffices to 
 show that $\phi_{\mathbf{v}}$ is surjective for any $\mathbf{v}
\in \Mov(\M)$, and thus an isomorphism.  This shows that the
birational models of $\M$ have the form $V(J) \git_{\beta} H$ as
$\beta$ varies over the GIT chambers for \eqref{eqtn:newGIT}.  Since
the target of $\phi_{\mathbf{v}}$ is nonzero for all $\mathbf{v} \in
\Mov(\M)$, the source must be as well, which shows that $\mathbf{v}
\in E$, and thus $\Mov(\M) \subseteq E$.

Fix a monomial $m \in S$  of degree $l\mathbf{v}$ for some
$l>0$.  We will show that there is a
polynomial $f \in S$ with no term divisible by any $y_{\pi}$ with $m-f \in
I_{\M}$.  We proceed iteratively.  Set $f_0=m$, so $m-f_0 \in I_{\M}$
holds trivially.  Suppose that at some stage $i$ some monomial of
$f_i$ is divisible by $y_{\pi}y_{\pi'}$ with $\pi \neq \pi'$.  Let the
maximum number of distinct $y_{\pi}$ dividing any monomial in $f_i$ be
$k_i$, and choose one such monomial $\tilde{m}$.  We can then subtract
an appropriate multiple of one of the generators of $I_{\M}$ of the
form \eqref{item:case4} or \eqref{item:case5} to replace $\tilde{m}$.
This gives a polynomial $f_{i+1}$ with $m-f_{i+1} \in I_{\M}$.  Note
that the number of monomials in $f_{i+1}$ divisible by $k_i$ distinct
$y_{\pi}$ has decreased under this operation, so after a finite number
of iterations $k_i$ must also decrease.  As this cannot happen
indefinitely, after a finite number of iterations we have a polynomial
$f_k$ with $m-f_k \in I_{\M}$ and no term of $f_k$ divisible by more
than one distinct $y_{\pi}$.

Since $\mathbf{v} \in \Mov(\M)$, by Lemma~\ref{l:Qpihyperplane} the
vector $\mathbf{v}$ is contained in the half-space on the opposite
side of the hyperplane $\Delta_{\pi}$ from $Q_{\pi}$.  Thus every term
of $f_k$ divisible by some $y_{\pi}$ must also be divisible by some
$x_I$ with $\delta_I$ not on $\Delta_{\pi}$.  We can then subtract an
appropriate multiple of one of the the generators of $I_{\M}$ of the
form \eqref{item:case2} or \eqref{item:case3} to obtain a polynomial
$f_{k+1}$ which has one fewer monomial divisible the maximal occuring power of any $y_{\pi}$.
After a finite number of iterations we thus obtain a polynomial $f$
with $m-f \in I_{\M}$ and no term of $f$ divisible by any $y_{\pi}$.
This shows that $m$ is in the image of $\psi_{\mathbf{v}}$. Since the
monomials of degree $l\mathbf{v}$ as $l$ varies span $\oplus_{l \geq
  0} (S/I_{\M})_{l \mathbf{v}}$, this shows that $\psi_{\mathbf{v}}$
is surjective as required.
\end{proof}

\begin{bibdiv}
\begin{biblist}

\bib{Alperetal}{article}{
   author={Alper, Jarod},
   author={Fedorchuk, Maksym},
   author={Smyth, David Ishii},
   title={Finite Hilbert stability of (bi)canonical curves},
   journal={Invent. Math.},
   volume={191},
   date={2013},
   number={3},
   pages={671--718},
   issn={0020-9910},
}

\bib{CoxRingBook}{book}{
   author={Arzhantsev, Ivan},
   author={Derenthal, Ulrich},
   author={Hausen, J{\"u}rgen},
   author={Laface, Antonio},
   title={Cox rings},
   series={Cambridge Studies in Advanced Mathematics},
   volume={144},
   publisher={Cambridge University Press, Cambridge},
   date={2015},
   pages={viii+530},
   isbn={978-1-107-02462-5},
}

\bib{M06M2package}{unpublished}{
label={BGM17},
author={Martha Bernal Guill\'en},
author={Diane Maclagan},
title={CoxRingPackage - A package to compute the Cox Ring of $\M$},
note={Available from \url{https://sites.google.com/site/mmbernalguillen/research/coxringm06} and \url{http://homepages.warwick.ac.uk/staff/D.Maclagan/papers/CoxRingM06.html}}
}

\bib{BCHM}{article}{
  author={Birkar, Caucher},
   author={Cascini, Paolo},
   author={Hacon, Christopher D.},
   author={McKernan, James},
   title={Existence of minimal models for varieties of log general type},
   journal={J. Amer. Math. Soc.},
   volume={23},
   date={2010},
   number={2},
   pages={405--468},
   issn={0894-0347},
}

\bib{BKR}{unpublished}{
  label={BKR16},
  author={Janko B\"ohm},
  author={Simon Keicher},
  author={Yue Ren},
  title={Computing GIT-fans with symmetry and the Mori chamber decomposition of $\M$},
  year={2016},
  note={arXiv:1603.09241}
}

\bib{Castravet}{article}{
   author={Castravet, Ana-Maria},
   title={The Cox ring of $\overline M_{0,6}$},
   journal={Trans. Amer. Math. Soc.},
   volume={361},
   date={2009},
   number={7},
   pages={3851--3878},
   issn={0002-9947},
}

\bib{CastravetTevelevNotMDS}{article}{
   author={Castravet, Ana-Maria},
   author={Tevelev, Jenia},
   title={$\overline{M}_{0,n}$ is not a Mori dream space},
   journal={Duke Math. J.},
   volume={164},
   date={2015},
   number={8},
   pages={1641--1667},
   issn={0012-7094},
}

\bib{ChenCoskun}{article}{
   author={Chen, Dawei},
   author={Coskun, Izzet},
   title={Extremal effective divisors on $\overline{\scr{M}}_{1,n}$},
   journal={Math. Ann.},
   volume={359},
   date={2014},
   number={3-4},
   pages={891--908},
   issn={0025-5831},
}

\bib{CoxToricTotalCoordinate}{article}{
    author = {Cox, David A.},
     title = {The homogeneous coordinate ring of a toric variety},
   journal = {J. Algebraic Geom.},
    volume = {4},
      year = {1995},
    number = {1},
     pages = {17--50},
      issn = {1056-3911}
}

\bib{Hassett}{article}{
   author={Hassett, Brendan},
   title={Classical and minimal models of the moduli space of curves of
   genus two},
   conference={
      title={Geometric methods in algebra and number theory},
   },
   book={
      series={Progr. Math.},
      volume={235},
      publisher={Birkh\"auser Boston, Boston, MA},
   },
   date={2005},
   pages={169--192},
}

\bib{HassettHyeon}{article}{
   author={Hassett, Brendan},
   author={Hyeon, Donghoon},
   title={Log minimal model program for the moduli space of stable curves:
   the first flip},
   journal={Ann. of Math. (2)},
   volume={177},
   date={2013},
   number={3},
   pages={911--968},
   issn={0003-486X},
}

\bib{HassettTschinkel}{article}{
   author={Hassett, Brendan},
   author={Tschinkel, Yuri},
   title={On the effective cone of the moduli space of pointed rational
   curves},
   conference={
      title={Topology and geometry: commemorating SISTAG},
   },
   book={
      series={Contemp. Math.},
      volume={314},
      publisher={Amer. Math. Soc.},
      place={Providence, RI},
   },
   date={2002},
   pages={83--96},
}

\bib{HausenKeicherLaface}{unpublished}{
author={Hausen, J\"urgen},
author={Keicher, Simon},
author={Laface, Antonio},
title={On blowing up the weighted projective plane},
year={2016},
note={arXiv:1608.04542}
}

\bib{HuKeelMDS}{article}{
   author={Hu, Yi},
   author={Keel, Sean},
   title={Mori dream spaces and GIT},
   note={Dedicated to William Fulton on the occasion of his 60th birthday},
   journal={Michigan Math. J.},
   volume={48},
   date={2000},
   pages={331--348},
   issn={0026-2285},
}

\bib{Polymake}{article}{
   author={Gawrilow, Ewgenij},
   author={Joswig, Michael},
   title={polymake: a framework for analyzing convex polytopes},
   conference={
      title={Polytopes---combinatorics and computation},
      address={Oberwolfach},
      date={1997},
   },
   book={
      series={DMV Sem.},
      volume={29},
      publisher={Birkh\"auser},
      place={Basel},
   },
   date={2000},
   pages={43--73},
}

\bib{GibneyMaclaganEquations}{article}{
   author={Gibney, Angela},
   author={Maclagan, Diane},
   title={Equations for Chow and Hilbert quotients},
   journal={Algebra Number Theory},
   volume={4},
   date={2010},
   number={7},
   pages={855--885},
   issn={1937-0652},
}

\bib{GonzalezKaru}{article}{
   author={Gonz\'alez, Jos\'e Luis},
   author={Karu, Kalle},
   title={Some non-finitely generated Cox rings},
   journal={Compos. Math.},
   volume={152},
   date={2016},
   number={5},
   pages={984--996},
   issn={0010-437X},
}

\bib{M2}{article}{
  label={M2},
  author={Grayson, Dan},
  author={Stillman, Mike},
  title={Macaulay 2, a software system for research in algebraic
    geometry},
  eprint={www.math.uiuc.edu/Macaulay2/}
}

\bib{KapranovChow}{article}{
   author={Kapranov, M. M.},
   title={Chow quotients of Grassmannians. I},
   conference={
      title={I. M. Gel\cprime fand Seminar},
   },
   book={
      series={Adv. Soviet Math.},
      volume={16},
      publisher={Amer. Math. Soc.},
      place={Providence, RI},
   },
   date={1993},
   pages={29--110},
}

\bib{Moon}{article}{
   author={Moon, Han-Bom},
   title={Mori's program for $\overline {\rm M}_{0,6}$ with symmetric
   divisors},
   journal={Math. Nachr.},
   volume={288},
   date={2015},
   number={7},
   pages={824--836},
   issn={0025-584X},
}

\bib{Mullane}{article}{
 author={Mullane, Scott},
   title={On the effective cone of $\overline{M}_{g,n}$},
   journal={Adv. Math.},
   volume={320},
   date={2017},
   pages={500--519},
   issn={0001-8708},
}

\bib{Vermeire}{article}{
   author={Vermeire, Peter},
   title={A counterexample to Fulton's conjecture on $\overline M_{0,n}$},
   journal={J. Algebra},
   volume={248},
   date={2002},
   number={2},
   pages={780--784},
   issn={0021-8693},
}
\end{biblist}
\end{bibdiv}

\end{document}